\newtheorem{theorem}[equation]{Theorem}
\newtheorem{corollary}[equation]{Corollary}
\newtheorem{lemma}[equation]{Lemma}
\newtheorem{proposition}[equation]{Proposition}
\theoremstyle{definition}
\newtheorem{exam}[equation]{Example}
\numberwithin{equation}{section}
\definecolor{mjo}{rgb}{0,0,.9}
\newcommand{\ZZ}{\mathbb{Z}}
\newcommand{\SYM}{\operatorname{Sym}}
\newcommand{\ALT}{\operatorname{Alt}}
\global\long\def\dng{\text{\sf DNG}}
\global\long\def\mex{\operatorname{mex}}
\global\long\def\nim{\operatorname{nim}}
\global\long\def\opt{\operatorname{Opt}}
\global\long\def\pty{\operatorname{pty}}
\global\long\def\type{\operatorname{type}}
\global\long\def\spr{d}
\global\long\def\dih{\operatorname{Dih}}
\newcommand{\notes}[1]{}
\subjclass[2000]{91A46, 20D30}
\keywords{impartial game, maximal subgroup}
\begin{document}

\setlength{\jot}{0pt} 

\title
[Impartial avoidance games for generating finite groups]
{Impartial avoidance games for generating finite groups}

\author{Bret J.~Benesh}
\author{Dana C.~Ernst}
\author{N\'andor Sieben}

\address{
Department of Mathematics,
College of Saint Benedict and Saint John's University,
37 College Avenue South,
Saint Joseph, MN 56374-5011, USA
}
\email{bbenesh@csbsju.edu}
\address{
Department of Mathematics and Statistics,
Northern Arizona University PO Box 5717,
Flagstaff, AZ 86011-5717, USA
}
\email{Dana.Ernst@nau.edu,Nandor.Sieben@nau.edu}

\thanks{This work was conducted during the third author's visit to DIMACS partially enabled through support from the National Science Foundation under grant number \#CCF-1445755.}

\date{\today}


\begin{abstract}
We study an impartial avoidance game introduced by Anderson and Harary.
The game is played by two players who alternately select previously unselected
elements of a finite group.
The first player who cannot select an element without making the set of jointly-selected
elements into a generating set for the group loses the game.
We develop criteria on the maximal subgroups that determine the nim-numbers of these games and use our criteria to study our game for several families of groups, including nilpotent, sporadic, and symmetric groups. 
\end{abstract}


\maketitle


\begin{section}{Introduction}


Anderson and Harary~\cite{anderson.harary:achievement} introduced a two-player
impartial game called \emph{Do Not Generate}. In this avoidance game, two players alternately take 
turns selecting previously unselected elements of a finite group until the group is generated
by the jointly-selected elements. The goal of the game is to avoid generating the group, and the 
first player who cannot select an element without building a generating set loses. 

In the original description of the avoidance game, the game ends when a generating set is built. 
This suggests mis\`ere-play convention. We want to find the nim-values of these games under normal-play, 
so our version does not allow the creation of a generating set---the game
ends when there are no available moves. The two versions have the same outcome, so the difference  
is only a question of viewpoint.

The outcomes of avoidance games were determined for finite abelian groups in~\cite{anderson.harary:achievement}, while Barnes~\cite{Barnes} provided
a criterion for determining the outcome for an arbitrary finite group. He also applied his criterion to determine the outcomes
for some of the more familiar finite groups, including abelian, dihedral, symmetric, and alternating groups, although his analysis is incomplete
for alternating groups.

The fundamental problem in the theory of impartial combinatorial games is finding the nim-number of the game. 
The nim-number determines the outcome of the game and also allows for the easy calculation of the nim-numbers of game sums.
The last two authors~\cite{ErnstSieben} developed tools for studying the nim-numbers of avoidance games, which they applied to certain groups including abelian and dihedral groups.  

Our aim is to find Barnes-like criteria for determining not only the outcomes but also the nim-numbers of avoidance games.
We start by recalling results from~\cite{ErnstSieben} on cyclic groups and groups of odd order, and then we reformulate Barnes' condition in terms of maximal subgroups (Proposition~\ref{prop:DNGMaximalCover}) in the case of non-cyclic groups.  This allows us to determine the nim-numbers of the avoidance games for non-cyclic groups that satisfy Barnes' condition (Proposition \ref{prop:NotCycNotCov}).
Then we combine this with the results of~\cite{ErnstSieben} for cyclic groups to get a complete classification (Theorem~\ref{thm:DNGClassification}) 
of the possible values of the avoidance game. Next, we reformulate our classification in terms of maximal subgroups (Corollary~\ref{cor:DNGCriteria}), and
we also provide a practical checklist for determining the nim-number corresponding to a given group (Proposition~\ref{cor:checklist}). We then apply our theoretical results for several families of groups in Section~\ref{section:Applications}, which is followed by a section whose main result is about quotient groups (Proposition~\ref{prop:FrattiniQuotients}).  We end with several open questions in Section~\ref{sec:questions}.   

Our development is guided by the following intuitive understanding of the game.
At the end of an avoidance game, the players will realize that they simply took turns selecting elements from a single maximal subgroup. 
If they knew in advance which maximal subgroup was going to remain at the end of the game, then the game would be no more complicated than a simple subtraction 
game~\cite[Section 7.6]{albert2007lessons} on a single pile where the players can only remove one object on each turn. The first player wins if that maximal 
subgroup has odd order and the second player wins otherwise. Therefore, the game is a really a struggle to determine which maximal subgroup will 
remain at the end. Viewed this way, the second player's best strategy is to select an element of even order, if possible. It is intuitively clear 
that the first player has a winning strategy if all of the maximal subgroups have odd order, and the second player has a winning strategy if all 
of the maximal subgroups have even order. 

The authors thank John Bray, Derek Holt, and the anonymous referee for suggestions that greatly improved the paper.

\end{section}


\begin{section}{Preliminaries}


\subsection{Impartial games}


We briefly recall the basic terminology of impartial games. 
A comprehensive treatment of impartial games can be
found in~\cite{albert2007lessons,SiegelBook}. An \emph{impartial
game} is a finite set $X$ of \emph{positions} together with a starting
position and a collection $\{\opt(P)\subseteq X\mid P\in X\}$, where $\opt(P)$ is the set of
possible \emph{options} for a position $P$. Two players take turns replacing the current position $P$ with one of the available options in $\opt(P)$. The player who encounters
an empty option set cannot move and therefore \emph{loses}. All games
must come to an end in finitely many turns, so we do not allow infinite
lines of play. 
An impartial game is an \emph{N-position} if the next player wins and it 
is a \emph{P-position} if the previous player wins.

The \emph{minimum excludant} $\mex(A)$ of a set $A$ of ordinals
is the smallest ordinal not contained in the set. The\emph{ nim-number}
$\nim(P)$ of a position $P$ is the minimum excludant of the set
of nim-numbers of the options of $P$. That is, 
\[
\nim(P):=\mex\{\nim(Q)\mid Q\in\opt(P)\}.
\]
Note that the minimum excludant of the empty set is $0$, so the terminal positions
of a game have nim-number $0$. The \emph{nim-number of a game} is the
nim-number of its starting position. The nim-number of a game determines
the outcome of a game since a position $P$ is a P-position if and
only if $\nim(P)=0$; an immediate consequence of this is that the second player has a winning strategy if and only if the nim-number for a game is $0$. 

The \emph{sum} of the games $P$ and $R$ is the game $P+R$ whose
set of options is 
\[
\opt(P+R):=\{Q+R\mid Q\in\opt(P)\}\cup\{P+S\mid S\in\opt(R)\}.
\]
We write $P=R$ if $P+R$ is a P-position. 

The one-pile NIM game with $n$ stones is denoted
by the \emph{nimber} $*n$. The set of options of $*n$ is $\opt(*n)=\{*0,\ldots,*(n-1)\}$.
The fundamental Sprague--Grundy Theorem~\cite{albert2007lessons,SiegelBook} states that $P=*\nim(P)$ for every impartial game $P$.


\subsection{Avoidance games for groups}


We now give a more precise description of the avoidance game $\dng(G)$ played on a nontrivial finite group $G$. We also recall some definitions and results from~\cite{ErnstSieben}. The positions of $\dng(G)$ are exactly the non-generating subsets of $G$; these are the sets of jointly-selected elements.  
The starting position is the empty set since neither player has chosen an element yet.
The first player chooses $x_{1}\in G$ such that $\langle x_{1}\rangle\neq G$ and, at the $k$th turn, the designated player selects $x_{k}\in G\setminus\{x_{1},\ldots,x_{k-1}\}$, 
such that $\langle x_{1},\ldots,x_{k}\rangle\neq G$. A position $Q$ is an option of $P$ if $Q=P \cup \{g\}$ for some $g \in G \setminus P$.  The player who cannot select an element without building a generating set loses the game. We note that there is no avoidance game for the trivial group since the empty set generates the whole group.

Because the game ends once the set of selected elements becomes a maximal subgroup, the set $\mathcal{M}$ of maximal subgroups play a significant role in the game.
The last two authors~\cite{ErnstSieben} define the set
\[
\mathcal{I}:=\{\cap\mathcal{N}\mid\emptyset\not=\mathcal{N\subseteq\mathcal{M}}\}
\]
of \emph{intersection subgroups}, which is the set of all possible intersections of maximal subgroups.
The smallest intersection subgroup is the Frattini subgroup $\Phi(G)$ of $G$, which is the intersection of all maximal subgroups of $G$. 

\begin{figure}
\includegraphics[scale=0.9]{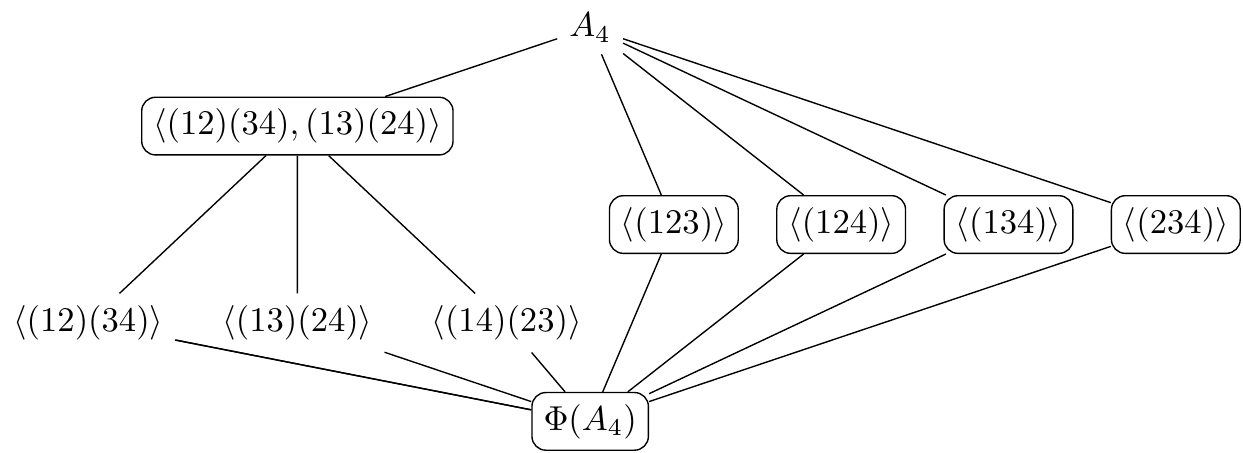}
\caption{\label{A4lattice} Subgroup lattice of $A_4$ with the intersection subgroups circled.}
\end{figure}

\begin{exam}
Subgroups---even those that contain $\Phi(G)$---need not be intersection subgroups.  
The maximal subgroups of $A_4$ comprise four subgroups of order $3$ and one subgroup of order $4$, as shown in Figure~\ref{A4lattice}. 
The subgroup $H=\langle (1,2)(3,4) \rangle$ contains the trivial Frattini subgroup $\Phi(A_4)$, 
but no intersection of some subset of the five maximal subgroups yields a subgroup of order $|H|=2$. So, $H$ is not an intersection subgroup of $A_4$. 
\end{exam}

The set $\mathcal{I}$ of intersection subgroups is partially
ordered by inclusion. We use interval notation to denote certain subsets
of $\mathcal{I}$. For example, if $I\in\mathcal{I}$, then $(-\infty,I):=\{J\in\mathcal{I}\mid J\subset I\}$. 

For each $I\in\mathcal{I}$ let
\[
X_{I}:=\mathcal{P}(I)\setminus\cup\{\mathcal{P}(J)\mid J\in(-\infty,I)\}
\]
be the collection of those subsets of $I$ that are not contained
in any other intersection subgroup smaller than $I$. We let $\mathcal{X}:=\{X_{I}\mid I\in\mathcal{I}\}$
and call an element of $\mathcal{X}$ a \emph{structure class}. 

Parity plays a crucial role in the theory of impartial games.
We define the \emph{parity of a natural number $n$} via $\pty(n):=(1+(-1)^{n+1})/2$. The \emph{parity of a set}
is the parity of the size of the set. 
We will say that the set is \emph{even} or \emph{odd} according to the parity of the set.
Observe that an option of a position has the opposite parity.
The \emph{parity of a structure class} is defined to be $\pty(X_{I}):=\pty(I)$. 

The set $\mathcal{X}$ of structure classes is a partition of the
set of game positions of $\dng(G)$. The starting position $\emptyset$
is in $X_{\Phi(G)}$. The partition $\mathcal{X}$ is compatible with
the option relationship between game positions~\cite[Corollary~3.11]{ErnstSieben}: if $X_{I},X_{J}\in\mathcal{X}$
and $P,Q\in X_{I}\ne X_{J}$, then $\opt(P)\cap X_{J}\not=\emptyset$
if and only if $\opt(Q)\cap X_{J}\ne\emptyset$. 

We say that $X_{J}$ is an \emph{option} of $X_{I}$ and we write
$X_{J}\in\opt(X_{I})$ if $\opt(I)\cap X_{J}\not=\emptyset$. The
\emph{structure digraph} of $\dng(G)$ has vertex set $\{X_{I}\mid I\in\mathcal{I}\}$
and edge set $\{(X_{I},X_{J})\mid X_{J}\in\opt(X_{I})\}$.

If $P,Q\in X_{I}\in\mathcal{X}$ and $\pty(P)=\pty(Q)$, then $\nim(P)=\nim(Q)$ by~\cite[Proposition~3.15]{ErnstSieben}.
In a \emph{structure diagram,} a structure class $X_{I}$ is represented
by a triangle pointing down if $I$ is odd and by a triangle pointing
up if $I$ is even. The triangles are divided into a smaller triangle
and a trapezoid, where the smaller triangle represents the odd positions
of $X_{I}$ and the trapezoid represents the even positions of $X_{I}$.
The numbers in the smaller triangle and the trapezoid are the nim-numbers
of these positions. There is a directed edge from $X_{I}$ to $X_{J}$
provided $X_{J}\in\opt(X_{I})$.  See Figure~\ref{fig:C18xC2}(c) for an example of a structure diagram. 

The \emph{type} of the structure class $X_{I}$ is the triple 
\[
\type(X_{I}):=(\pty(I),\nim(P),\nim(Q)),
\]
where $P,Q\in X_{I}$ with $\pty(P)=0$ and $\pty(Q)=1$. Note that the type of a structure class $X_I$ is determined by the parity of $X_I$ and the types of the options of $X_I$ as shown in Figure~\ref{fig:type}.

\begin{figure}
\includegraphics{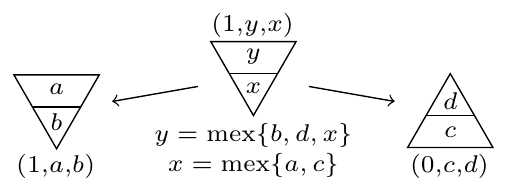}
\caption{\label{fig:type}Example for the calculation of the type of a structure diagram using the types of the options.
}
\end{figure}

The nim-number of the game is the same as the nim-number of the initial position $\emptyset$, which is an even subset of $\Phi(G)$. Because of this, the nim-number of the game is the second component of $\type(X_{\Phi(G)})$, which corresponds to the trapezoidal part of the triangle representing the source vertex $X_{\Phi(G)}$ of the structure diagram.

Loosely speaking, the \emph{simplified structure diagram} of $\dng(G)$ is built from the structure diagram by identifying two structure 
classes that have the same type and the same collection consisting of option types together with the type of the structure class itself. We remove any resulting loops to obtain a simple 
graph as described in~\cite{ErnstSieben}. See Figure~\ref{fig:C18xC2}(d) for an example of a simplified structure diagram.

\end{section}


\begin{section}{Groups of odd order}


This next result is the foundation for our brief study of groups of odd order.

\begin{proposition}\label{prop:TypeOnlyInOddMaximals}
If $X_I$ is a structure class of $\dng(G)$ such that $I$ is only contained in odd maximal subgroups, then $\type(X_I)=(1,1,0)$.
\end{proposition}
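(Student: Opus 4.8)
The plan is to argue by induction on the poset $\mathcal{I}$ of intersection subgroups, working downward from the maximal subgroups and using the recursive computation of the type of a structure class from the types of its options recorded in Figure~\ref{fig:type}. Two preliminary facts make the induction run. First, since $I$ is an intersection subgroup it is contained in at least one maximal subgroup, and by hypothesis every such subgroup is odd; hence $|I|$ is odd and $\pty(I)=1$, which already supplies the first coordinate of $\type(X_I)$. Second, I claim every option $X_J$ of $X_I$ has $J\supsetneq I$ and is again contained only in odd maximal subgroups. Indeed, a position $P\in X_I$ has $I$ as its smallest containing intersection subgroup, so adjoining an element $g\notin I$ produces a position whose smallest containing intersection subgroup $J$ satisfies $I\subsetneq J$; and any maximal subgroup over $J$ lies over $I$, hence is odd. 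Thus the inductive hypothesis applies to every option and yields $\type(X_J)=(1,1,0)$.

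For the base case, suppose $I=M$ is a maximal subgroup, so $M$ is odd and $X_M$ has no options into other classes, since adjoining any $g\notin M$ generates $G$. The positions of $X_M$ are exactly the subsets of $M$ not lying in a smaller intersection subgroup, and a legal move simply adjoins one more element of $M$; this is the one-pile subtraction game on the $|M|-|P|$ unchosen elements. Because $|M|$ is odd, an even position $P$ has $\nim(P)=1$ and an odd position (including the terminal position $M$ itself) has nim-number $0$, so $\type(X_M)=(1,1,0)$.

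For the inductive step, suppose $I$ is not maximal. Writing $a$ and $b$ for the parity-determined nim-numbers of the even and odd positions of $X_I$ (legitimate by \cite[Proposition~3.15]{ErnstSieben}), an even position $P$ has a self-loop move adjoining an element of $I\setminus P$, landing on an odd position of $X_I$ of nim-number $b$, together with up-moves landing on odd positions of option classes $X_J$, each of nim-number $0$ by the computed type; hence $a=\mex\{0,b\}$. Symmetrically an odd position has available nim-numbers $a$ (self-loop) and $1$ (up-move), so $b=\mex\{1,a\}$. A short case analysis shows this system has the unique solution $(a,b)=(1,0)$: the first equation forces $a\ge 1$, whence the second gives $b=0$, and then the first gives $a=1$. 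Therefore $\type(X_I)=(1,1,0)$, completing the induction.

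I expect the main obstacle to be the bookkeeping around the option structure rather than any deep difficulty. One must verify that self-loops are always available from even positions (which holds because $I$ is odd, so no even position equals $I$), that up-moves exist precisely when $I$ is non-maximal and, via \cite[Corollary~3.11]{ErnstSieben}, are available uniformly from all positions of $X_I$, and that every such up-move genuinely lands in a strictly larger intersection subgroup satisfying the hypothesis. A small point to confirm is that the exceptional odd position $I$ itself, which has no self-loop, still receives nim-number $b=0$: its options are only up-moves of nim-number $1$, giving $\mex\{1\}=0$, consistent with the general odd computation $\mex\{a,1\}=\mex\{1,1\}=0$ once $a=1$.
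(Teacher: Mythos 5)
Your proof is correct and takes essentially the same route as the paper's: structural induction over the structure classes, with the terminal classes (odd maximal subgroups, analyzed as a one-pile subtraction game) as the base case, inheritance of the only-odd-maximals hypothesis by every option class since any maximal subgroup over $J$ contains $I$, and a mex computation yielding type $(1,1,0)$. The only difference is one of packaging: where the paper simply invokes the established fact that a class's type is determined by its parity and the types of its options (Figure~\ref{fig:type}) and writes $(1,\mex\{0\},\mex\{1\})$, you re-derive that computation explicitly, including the self-loop structure and the exceptional position $I$ itself.
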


\begin{proof}
We proceed by structural induction on the structure classes. If $X_I$ is terminal, then $I$ is an odd maximal subgroup, so $\type(X_I)=(1,1,0)$.

Now, assume that $X_I$ is not terminal. Let $X_J$ be an option of $X_I$ and let $M$ be a maximal subgroup containing $J$. 
Then $I \leq J \leq M$ and we may conclude that $M$ is odd.  Thus, $\type(X_J)=(1,1,0)$ by induction.  
This implies that $X_I$ only has options of type $(1,1,0)$, and hence $\type(X_I)=(1,\mex\{0\},\mex\{1\})=(1,1,0)$.
\end{proof}

The next result, originally done with a different proof in~\cite[Proposition~3.22]{ErnstSieben}, is an immediate consequence of Proposition~\ref{prop:TypeOnlyInOddMaximals} since $\dng(G)$ is determined by the type of $X_{\Phi(G)}$. 

\begin{corollary}\label{cor:finiteOdd}
If $G$ is a nontrivial odd finite group, then $\dng(G)=*1$.
\end{corollary}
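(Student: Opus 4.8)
The plan is to derive Corollary~\ref{cor:finiteOdd} directly from Proposition~\ref{prop:TypeOnlyInOddMaximals}, using the observation recorded in the excerpt that the nim-number of $\dng(G)$ is the second component of $\type(X_{\Phi(G)})$. So my first task is to place myself in a position to apply the proposition to the structure class $X_{\Phi(G)}$ corresponding to the Frattini subgroup, since the starting position $\emptyset$ lies in $X_{\Phi(G)}$ and the nim-number of the game is the nim-number of this starting position.

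The key step is to verify the hypothesis of Proposition~\ref{prop:TypeOnlyInOddMaximals} for $I = \Phi(G)$, namely that $\Phi(G)$ is only contained in odd maximal subgroups. This is where the hypothesis that $G$ has odd order enters: every subgroup of an odd-order group has odd order by Lagrange's theorem, so in particular \emph{every} maximal subgroup of $G$ is odd. Since $\Phi(G)$ is contained in every maximal subgroup (it is their intersection), and all of these are odd, the hypothesis of the proposition is satisfied trivially. I would note that $G$ nontrivial guarantees that $\dng(G)$ is a genuine game and that $\Phi(G)$ is a proper subgroup, so $X_{\Phi(G)}$ is a legitimate structure class.

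Applying Proposition~\ref{prop:TypeOnlyInOddMaximals} then yields $\type(X_{\Phi(G)}) = (1,1,0)$. Reading off the second component of this triple gives $\nim(\emptyset) = 1$, and since the nim-number of the game equals the nim-number of its starting position, the Sprague--Grundy Theorem lets me conclude $\dng(G) = *1$.

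I expect no serious obstacle here; the corollary is a clean specialization. The only point requiring a moment of care is making explicit why the odd order of $G$ forces \emph{all} maximal subgroups (not merely those containing some specific element) to be odd, which is immediate from Lagrange. The conceptual work has already been done in establishing the proposition, so this corollary is essentially a one-line deduction once the hypothesis is checked.
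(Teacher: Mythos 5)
Your proposal is correct and follows exactly the paper's route: the paper also derives this corollary as an immediate consequence of Proposition~\ref{prop:TypeOnlyInOddMaximals} applied to $X_{\Phi(G)}$, using the fact that the nim-number of $\dng(G)$ is the second component of $\type(X_{\Phi(G)})$. Your write-up merely makes explicit the two steps the paper leaves implicit (Lagrange forcing all maximal subgroups to be odd, and reading off the second component of $(1,1,0)$), which is fine.
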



\end{section}


\begin{section}{Cyclic groups}


Odd cyclic cyclic groups were done in the previous section, and even cyclic groups are characterized by the following result.

\begin{proposition}\cite[Corollary 6.7]{ErnstSieben}
\label{prop:CyclicGroups}
If $G$ is an even cyclic group, then
\[
\dng(G) =  
   \begin{dcases} 
      *1, & |G|=2 \\
      *3, & 2\neq |G| \equiv_4 2 \\
      *0, & |G| \equiv_4 0.
   \end{dcases}
\]
\end{proposition}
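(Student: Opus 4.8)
The plan is to compute $\type(X_{\Phi(G)})$ by climbing the lattice $\mathcal{I}$ of intersection subgroups with the type recursion of Figure~\ref{fig:type}, using throughout that positions of a structure class with equal parity carry equal nim-numbers~\cite{ErnstSieben}. Write $G=\ZZ_n$ with distinct prime divisors $p_1,\dots,p_k$. The maximal subgroups are the subgroups $M_{p_i}$ of order $n/p_i$, and for nonempty $S\subseteq\{1,\dots,k\}$ the intersection $I_S:=\bigcap_{i\in S}M_{p_i}$ is the subgroup of order $n/\prod_{i\in S}p_i$; hence $\mathcal{I}$ is the Boolean lattice of nonempty subsets $S$ ordered by reverse inclusion, with $\Phi(G)=I_{\{1,\dots,k\}}$ at the top. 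Reading orders modulo $4$: if $4\mid n$ then every $I_S$ is even, while if $n\equiv_4 2$ then $I_S$ is odd exactly when the prime $2$ is among $\{p_i\mid i\in S\}$, equivalently when $I_S\le M_2$. The case $|G|=2$ is the degenerate instance $k=1$, where $\Phi(G)=M_2$ is the trivial (odd, maximal) subgroup, so $\type(X_{\Phi(G)})=(1,1,0)$ by Proposition~\ref{prop:TypeOnlyInOddMaximals} and $\dng(G)=*1$.

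Two structural facts would drive the induction. First, for any maximal subgroup $M$ the class $X_M$ is a sink: if $P\in X_M$ and $g\notin M$ with $\langle P\cup\{g\}\rangle\ne G$, then $\langle P\cup\{g\}\rangle$ lies in some maximal $M'\ne M$, forcing $P\subseteq M\cap M'$, an intersection subgroup properly below $M$, contradicting $P\in X_M$. Consequently an odd maximal has type $(1,1,0)$ (Proposition~\ref{prop:TypeOnlyInOddMaximals}), and an even maximal has type $(0,0,1)$: its terminal full set is even with nim $0$, and every odd position has as its only options the even positions of $X_M$, so its nim is $\mex\{0\}=1$. Second, in the cyclic case the option classes of $X_{I_S}$ are exactly the $X_{I_{S'}}$ with $S'\subsetneq S$: from a position generating $I_S$ one reaches $I_{S'}$ by adjoining a generator of $I_{S'}$, while any adjoined element keeps the generated subgroup inside an intersection subgroup containing $I_S$, so no incomparable class is reachable.

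With these in hand I would establish an even analogue of Proposition~\ref{prop:TypeOnlyInOddMaximals}: if $I$ is even and contained only in even maximal subgroups, then $\type(X_I)=(0,0,1)$. The induction is clean because each option class $X_J$ of $X_I$ has $J\ge I$, so $J$ is again even and contained only in even maximals; feeding the common option type $(0,0,1)$ into the recursion for an even class returns $(0,0,1)$. When $4\mid n$, every maximal subgroup is even, so $\Phi(G)$ is even and contained only in even maximals; thus $\type(X_{\Phi(G)})=(0,0,1)$ and $\dng(G)=*0$, settling the third case.

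For $n\equiv_4 2$ with $n>2$ (so $k\ge2$) I would show that every odd class $X_{I_S}$ with $|S|\ge2$ has type $(1,3,2)$, by induction on $|S|$. Its options are the even classes $X_{I_{S'}}$ with $2\notin S'$, each of type $(0,0,1)$ by the even analogue, together with the strictly smaller odd classes, of type $(1,1,0)$ when $S'=\{2\}$ and $(1,3,2)$ otherwise by induction. Feeding these option types into the odd-class recursion gives odd component $2$ (the mex of the even components $\{0,1,\dots\}$ of the options) and even component $3$ (the mex of $\{0,1,2\}$), independently of $|S|$, so the value stabilizes immediately at $(1,3,2)$. Since $\Phi(G)=I_{\{1,\dots,k\}}$ is odd with $k\ge2$, we obtain $\type(X_{\Phi(G)})=(1,3,2)$ and $\dng(G)=*3$. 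The delicate point is not the mex bookkeeping, which saturates at once, but correctly assembling the structure digraph: I expect the main obstacle to be proving that $X_{I_S}$ has an edge to every smaller $X_{I_{S'}}$ (the full down-set, not merely the covers) and pinning down, through the terminal full-set position of each class, the component that parity computes freely in the recursion.
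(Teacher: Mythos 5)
Your proof is correct, but note that the paper never proves this statement at all: it imports it wholesale as \cite[Corollary 6.7]{ErnstSieben}, and the machinery it develops (Proposition~\ref{prop:TypeOnlyInOddMaximals}, Proposition~\ref{prop:NotCycNotCov}) is deliberately restricted to the \emph{non-cyclic} case, where the covering property of maximal subgroups (Lemma~\ref{lem:CoverIffNoncyclic}) is available. What you have done differently is extend the paper's own structure-class technique to handle cyclic groups directly: you identify the intersection lattice of $\ZZ_n$ with the Boolean lattice of nonempty sets of prime divisors ordered by reverse inclusion, observe that each maximal class is a sink, prove an even counterpart of Proposition~\ref{prop:TypeOnlyInOddMaximals} (an even $I$ lying only in even maximal subgroups has type $(0,0,1)$ --- a lemma which, applied to $\Phi(G)$, also recovers Corollary~\ref{cor:evenFrattini}), and then run the type recursion up the lattice after verifying the two digraph facts that matter: the option classes of $X_{I_S}$ are exactly the $X_{I_{S'}}$ with $\emptyset\ne S'\subsetneq S$ (reachability down the whole down-set via a generator of $I_{S'}$; no incomparable class reachable since any intersection subgroup containing an extension of $P$ contains $I_S$). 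I checked the mex bookkeeping and it is sound, with one point you should make explicit in a written version: in the $n\equiv_4 2$ case the base class $S=\{2,p\}$ has option types $(1,1,0)$ and $(0,0,1)$ only, so the even component $3=\mex\{0,1,2\}$ is correct \emph{only because} the value $2$ enters through the class's own odd positions (even positions of $X_{I_S}$ are proper subsets of $I_S$ and so have moves to odd positions of $X_{I_S}$ itself), not through any option class; omitting that self-reference would give the spurious type $(1,2,2)$, which Proposition~\ref{prop:spectrum} already rules out. Your closing remark about ``the component that parity computes freely'' shows you saw this asymmetry in the recursion. What your route buys is self-containment: together with Sections 3 and 5 it makes the classification in Theorem~\ref{thm:DNGClassification} independent of the abelian-group computations of \cite{ErnstSieben}; what the paper's citation buys is brevity.
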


The next two results are likely well-known to finite group theorists, but we provide their proofs for completeness. 

\begin{proposition}\label{prop:DirectProductMaximal} 
Let $G=H \times K$ for finite groups $H$ and $K$. If $M$ is a maximal subgroup of $H$, then $M \times K$ is a maximal subgroup of $G$.
\end{proposition}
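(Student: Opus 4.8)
The plan is to show that any subgroup $L$ lying between $M \times K$ and $G$ must equal one of these two, thereby transferring the maximality of $M$ in $H$ up to $G$. First I would record the two routine facts: $M \times K$ is a subgroup of $G$, being a product of subgroups, and it is proper because $M \neq H$ forces $M \times K \neq H \times K = G$.

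The heart of the argument is a correspondence between the intermediate subgroups. Let $\pi \colon G \to H$ be the projection $\pi(h,k) = h$. I would prove the claim that every subgroup $L$ with $M \times K \leq L \leq G$ has the form $L = L' \times K$, where $L' := \pi(L)$ is a subgroup of $H$ satisfying $M \leq L' \leq H$. The inclusion $L \subseteq L' \times K$ is immediate from the definition of $L'$. For the reverse inclusion, the key observation is that $\{e_H\} \times K \leq M \times K \leq L$, using $e_H \in M$; then, given any $(h', k') \in L' \times K$, I pick $k \in K$ with $(h', k) \in L$ and write $(h', k') = (h', k)\,(e_H, k^{-1}k')$ as a product of two elements of $L$, so $(h', k') \in L$. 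That $M \leq L'$ follows from $M \times \{e_K\} \leq M \times K \leq L$ and applying $\pi$.

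With the claim in hand, the subgroups of $G$ containing $M \times K$ are exactly the subgroups $L' \times K$ with $M \leq L' \leq H$, and distinct choices of $L'$ yield distinct $L$. Since $M$ is maximal in $H$, the only possibilities are $L' = M$ and $L' = H$, giving $L = M \times K$ and $L = G$ respectively. Hence there is no subgroup strictly between $M \times K$ and $G$, so $M \times K$ is maximal in $G$.

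The main obstacle is that $M$ need not be normal in $H$, so one cannot simply pass to a quotient $H/M$ or invoke the correspondence theorem for normal subgroups. The work is therefore concentrated in the claim above, where the containment $\{e_H\} \times K \leq L$ performs the role that normality would otherwise play, allowing me to lift an arbitrary intermediate subgroup back to a direct product.
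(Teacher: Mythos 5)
Your proof is correct, and it takes a genuinely different route from the paper's, though the two pivot on the same basic observation. The paper argues by generation: it takes a single element $(x,y) \in L \setminus (M \times K)$, uses $\{e\} \times K \leq L$ to pass to $(x,e) \in L$ with $x \notin M$, invokes maximality of $M$ in the form $\langle M, x \rangle = H$ to conclude $H \times \{e\} \leq L$, and then combines $H \times \{e\}$ with $\{e\} \times K$ to force $L = G$. You instead prove a structural lemma: every subgroup $L$ with $M \times K \leq L \leq G$ splits as $\pi(L) \times K$, so that $L \mapsto \pi(L)$ identifies the interval $[M \times K, G]$ in the subgroup lattice of $G$ with the interval $[M, H]$ in the subgroup lattice of $H$, and maximality is applied only at the very last step, to $\pi(L)$. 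Both arguments hinge on the containment $\{e\} \times K \leq M \times K \leq L$, which lets you multiply by elements of $\{e\} \times K$ to adjust the second coordinate --- exactly the substitute for normality that you correctly identify as the crux. What your version buys is generality: the interval correspondence holds for an arbitrary subgroup $M$ of $H$, not just a maximal one, so you have in effect proved a small lattice-isomorphism result of independent interest. What the paper's version buys is economy: the witness-element generation argument is shorter and produces only what the proposition needs.
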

\begin{proof}
Suppose that $L$ is a subgroup of $G$ such that $M \times K < L \leq H \times K=G$.  Let $(x,y) \in L \setminus (M \times K)$.  
Because $\{e\} \times K \le M \times K$, we can conclude that $(x,e) \notin M \times \{e\}$.  Then $\langle M,x  \rangle = H$ by the maximality of $M$.  We have $(x,e) \in L$ and $M \times \{e\} \leq L$ since $\{e\} \times K \leq L$, so $H \times \{e\}$ is a subgroup of $L$.  Then $H \times \{e\}$ and $\{e\} \times K$ are subgroups of $L$, so $G = H \times K \leq L$.  Therefore, $G=L$ and $M \times K$ is maximal in $G$.      
\end{proof}

\begin{proposition}\label{prop:4DirectProduct}
Let $G= P \times H$, where $|P|=2^n$ and $H$ is odd. Then every maximal subgroup of $G$ is even if and only if $n\ge 2$.
\end{proposition}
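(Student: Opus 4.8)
The plan is to classify the maximal subgroups of $G=P\times H$ completely and then simply read off their parities. The crucial structural input is that $P$ and $H$ have coprime orders, since $|P|=2^n$ is a power of $2$ and $|H|$ is odd. First I would establish a decomposition lemma: every subgroup $L\le G$ satisfies $L=(L\cap P)\times(L\cap H)$, where I identify $P$ with $P\times\{e\}$ and $H$ with $\{e\}\times H$. To see this, take $(x,y)\in L$; because $|x|$ divides $2^n$ while $|y|$ divides the odd number $|H|$, the orders of $x$ and $y$ are coprime, so the cyclic group $\langle(x,y)\rangle$ contains the appropriate powers $(x,y)^{|y|}$ and $(x,y)^{|x|}$, from which $(x,e)$ and $(e,y)$ are recovered as suitable powers. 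Hence $x\in L\cap P$ and $y\in L\cap H$, giving $L\subseteq(L\cap P)\times(L\cap H)$; the reverse inclusion is immediate.

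Next I would use this to pin down all maximal subgroups. Writing a proper subgroup as $A\times B$ with $A\le P$ and $B\le H$, if both inclusions $A<P$ and $B<H$ were strict, then $A\times B<A\times H<P\times H$ would exhibit $A\times B$ as non-maximal. Therefore a maximal subgroup must have the form $M\times H$ with $M<P$ or $P\times N$ with $N<H$, and in each case maximality forces $M$ to be maximal in $P$, respectively $N$ maximal in $H$. Conversely, Proposition~\ref{prop:DirectProductMaximal} guarantees that all such subgroups really are maximal in $G$. Thus the maximal subgroups of $G$ are exactly the $M\times H$ for $M$ maximal in $P$, together with the $P\times N$ for $N$ maximal in $H$.

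Finally I would compute parities. Since $P$ is a $2$-group, every maximal subgroup $M$ of $P$ has index $2$, so $|M\times H|=2^{n-1}|H|$, which is even precisely when $n\ge 2$. On the other hand $|P\times N|=2^n|N|$ is even for every $n\ge 1$, because $N$ has odd order while the factor $2^n$ supplies the parity. Consequently every maximal subgroup of $G$ is even if and only if the subgroups of the first type are even, that is, if and only if $n\ge 2$, which is the claim. I expect the main obstacle to be the exhaustiveness half of the classification: Proposition~\ref{prop:DirectProductMaximal} only \emph{produces} maximal subgroups of the two named forms, and the ``if'' direction genuinely requires that these are \emph{all} of them, which is exactly what the coprime decomposition lemma supplies. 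A minor point to record is that $P$ should be nontrivial, i.e.\ $n\ge 1$, so that the statement about maximal subgroups is not vacuous.
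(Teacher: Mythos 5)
Your proof is correct, but it takes a genuinely different route from the paper's. You prove a full structural classification: the coprime decomposition lemma $L=(L\cap P)\times(L\cap H)$ for every subgroup $L\le G$, from which you deduce that the maximal subgroups of $G$ are exactly the $M\times H$ (with $M$ maximal in $P$) and the $P\times N$ (with $N$ maximal in $H$), and then you read off parities. The paper never classifies the maximal subgroups. For the direction $n\ge 2\Rightarrow{}$every maximal subgroup is even, it argues only that no \emph{odd} subgroup can be maximal: an odd subgroup $K$ consists of elements of odd order, which necessarily have trivial first coordinate, so $K\le\{e\}\times H<M\times H<G$ for any maximal subgroup $M$ of $P$ (nontrivial because $n\ge 2$), and hence $K$ sits strictly below a proper subgroup. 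For the converse the paper uses Proposition~\ref{prop:DirectProductMaximal} exactly as you do, to exhibit the maximal subgroup $M\times H$, whose evenness forces $|M|\ge 2$ and thus $n\ge 2$. So both arguments exploit the same coprimality phenomenon (odd-order elements have trivial $2$-part), but the paper invokes it only through the single containment $K\le\{e\}\times H$, while you upgrade it to a direct-product decomposition of every subgroup. Your route costs more work (the lemma, the exhaustiveness argument, and the index-$2$ fact for maximal subgroups of $2$-groups) but yields strictly more information---a complete list of the maximal subgroups of $G$, which in particular reproves the relevant case of Proposition~\ref{prop:DirectProductMaximal}'s converse---whereas the paper's argument is shorter and needs only one containment chain. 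One small remark: your closing caveat is slightly misplaced. For $n=0$ with $H$ nontrivial the statement is not vacuous and remains true (all maximal subgroups are then odd); the only genuinely degenerate case is $G$ trivial, and the paper's proof silently excludes it as well, since it too begins by choosing a maximal subgroup $M$ of $P$.
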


\begin{proof}
Let $M$ be a maximal subgroup of $P$.  First, suppose that $n \geq 2$.  Because $|P| \geq 4$, Cauchy's Theorem implies $|M| \geq 2$.  
Let $K$ be an odd subgroup of $G$.  Then $K\le \{e\}\times H < M \times H$, so $K$ is not maximal.  Therefore, every maximal subgroup is even.

Now, suppose that every maximal subgroup is even.  Then $M \times H$ is maximal by Proposition~\ref{prop:DirectProductMaximal}, and so it must have even order.  Then $|M| \geq 2$, so $|P| \geq 4$ and $n \geq 2$. 
\end{proof}

The following corollary will help tie the results for nim-numbers of cyclic groups to the results for nim-numbers of non-cyclic groups in Section~\ref{section:MainResults}.

\begin{corollary}\label{cor:CyclicEvenMaximals}
Let $G$ be a finite cyclic group. Then every maximal subgroup of $G$ is even if and only if $4$ divides $|G|$.
\end{corollary}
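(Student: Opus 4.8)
The plan is to deduce this directly from Proposition~\ref{prop:4DirectProduct}, which already characterizes when every maximal subgroup of a group of the form $P \times H$ (with $|P|=2^n$ and $H$ odd) is even. The only real work is to recognize that a finite cyclic group admits exactly such a decomposition, after which the corollary becomes a translation of that proposition into the language of divisibility.

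First I would write $|G| = 2^n m$ with $m$ odd and factor $G \cong P \times H$, where $P$ is the Sylow $2$-subgroup of $G$, cyclic of order $2^n$, and $H$ is the odd part, cyclic of order $m$. This decomposition is available because a finite cyclic group is the internal direct product of its Sylow subgroups (equivalently, by the Chinese Remainder Theorem applied to $\ZZ/2^n m\ZZ$). Since $H$ is odd, the entire $2$-part of $|G|$ is concentrated in $P$, so $4$ divides $|G|$ precisely when $2^n \ge 4$, that is, when $n \ge 2$.

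Next I would apply Proposition~\ref{prop:4DirectProduct} to the decomposition $G = P \times H$, which gives that every maximal subgroup of $G$ is even if and only if $n \ge 2$. Chaining this with the equivalence of the previous step, $4 \mid |G| \iff n \ge 2$, yields that every maximal subgroup of $G$ is even if and only if $4$ divides $|G|$, as claimed.

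I do not expect a genuine obstacle here, since the substance is carried entirely by Proposition~\ref{prop:4DirectProduct}. The only point deserving a moment's care is the boundary behavior when $4 \nmid |G|$ (the cases $n=0$ and $n=1$): one wants to be sure the failure of the "every maximal subgroup is even" condition is not merely vacuous. Because $G$ is a nontrivial finite group it has at least one maximal subgroup, and Proposition~\ref{prop:4DirectProduct} guarantees that in these cases some maximal subgroup is genuinely odd, so the equivalence is not vacuous. As an alternative, should one prefer a self-contained argument, the same conclusion follows directly from the fact that the maximal subgroups of a cyclic group of order $n$ are exactly the (unique) subgroups of order $n/p$ for primes $p \mid n$, and checking the parity of $n/p$ in the three cases $n$ odd, $n \equiv_4 2$, and $4 \mid n$; but routing through Proposition~\ref{prop:4DirectProduct} is cleaner and reuses the work already in place.
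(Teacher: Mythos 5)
Your proof is correct and takes essentially the same route as the paper: decompose $G \cong P \times H$ with $P$ the Sylow $2$-subgroup and $H$ odd, then apply Proposition~\ref{prop:4DirectProduct}, noting that $4 \mid |G|$ exactly when $|P| \geq 4$. The extra care you take with the boundary cases and the alternative elementary argument are fine but not needed beyond what the paper's two-line proof already contains.
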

\begin{proof}
We may write $G$ as $G=P \times H$, where $P$ is a $2$-group and $H$ has odd order.  The result follows by Proposition~\ref{prop:4DirectProduct}.  
\end{proof}

\end{section}


\begin{section}{Non-cyclic Groups of even order}


Recall that a subset $\mathcal{C}$ of the power set of a group $G$ is a \emph{covering} of $G$ if $\bigcup \mathcal{C}=G$; in this case, we also say that $\mathcal{C}$ \emph{covers} $G$. 
Every proper subgroup of a finite group is contained in a maximal subgroup, but the set of maximal subgroups does not always cover the group.

We now consider even non-cyclic groups.  We will need Corollary~\ref{cor:InEvenMaximal} to do this, which follows immediately from Lemma~\ref{lem:CoverIffNoncyclic} and Lagrange's Theorem.

\begin{lemma}\label{lem:CoverIffNoncyclic} 
Let $G$ be a finite group.  Then the set of maximal subgroups of $G$ covers $G$ if and only if $G$ is non-cyclic. 
\end{lemma}
\begin{proof}
Suppose $G$ is non-cyclic and $x \in G$. Since $G$ is non-cyclic, $\langle x \rangle \not=G$.  Hence, $x \in \langle x \rangle \leq M$ for some maximal subgroup $M$ because there are only finitely many subgroups.  

Now, suppose $G$ is cyclic and let $x$ be a generator for $G$.  Then $x$ cannot be contained in any maximal subgroup $M$, lest $G=\langle x \rangle \leq M \neq G$.
\end{proof}

\begin{corollary}\label{cor:InEvenMaximal}
If $G$ is a finite non-cyclic group and $t \in G$ has even order, then $t$ is contained in an even maximal subgroup. 
\end{corollary}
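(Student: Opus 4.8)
The plan is to combine the covering property established in Lemma~\ref{lem:CoverIffNoncyclic} with a divisibility argument via Lagrange's Theorem. Since $G$ is non-cyclic, Lemma~\ref{lem:CoverIffNoncyclic} guarantees that the set of maximal subgroups of $G$ covers $G$. In particular, the given element $t$ must lie in at least one maximal subgroup $M$, so the existence of \emph{some} maximal subgroup containing $t$ is immediate; the only remaining task is to show that such an $M$ is forced to have even order.

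First I would fix a maximal subgroup $M$ with $t \in M$, which exists by the covering property just noted. Then I would observe that the cyclic subgroup $\langle t \rangle$ is contained in $M$, so by Lagrange's Theorem the order of $t$, which equals $|\langle t \rangle|$, divides $|M|$. Since $t$ has even order by hypothesis, $2$ divides $|M|$, and hence $M$ is an even maximal subgroup containing $t$, as desired.

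There is essentially no obstacle here: the argument is a direct chaining of the covering lemma with Lagrange's Theorem, exactly as anticipated in the remark preceding the statement. The one point worth emphasizing is \emph{why} non-cyclicity is needed---it is precisely the hypothesis that lets us place $t$ inside a maximal subgroup at all. For a cyclic group a generator lies in no maximal subgroup (the reverse direction of Lemma~\ref{lem:CoverIffNoncyclic}), so the conclusion could fail without the non-cyclic assumption, even though the divisibility step would still go through whenever a containing maximal subgroup happens to exist.
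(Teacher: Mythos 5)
Your proof is correct and follows exactly the route the paper intends: the paper states that Corollary~\ref{cor:InEvenMaximal} ``follows immediately from Lemma~\ref{lem:CoverIffNoncyclic} and Lagrange's Theorem,'' which is precisely your chain of reasoning (non-cyclicity places $t$ in some maximal subgroup $M$, and Lagrange applied to $\langle t \rangle \leq M$ forces $|M|$ to be even). Your closing remark on why non-cyclicity is essential is also consistent with the reverse direction of the lemma.
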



We recall one of Barnes' main results in~\cite{Barnes}.

\begin{proposition}\cite[Theorem~1]{Barnes}
\label{prop:BarnesCriterion}
The first player wins $\dng(G)$ if and only if there is an element $g \in G$ of odd order such that $\langle g, t \rangle = G$ for every involution $t \in G$.
\end{proposition}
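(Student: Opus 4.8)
The plan is to reduce the entire game to a single numerical invariant: the parity of the order of the maximal subgroup at which the game terminates. As noted in the text, the terminal positions of $\dng(G)$ are exactly the maximal subgroups, and a play that ends at a maximal subgroup $M$ consists of precisely $|M|$ moves. Hence the player left unable to move is the first player when $|M|$ is even and the second player when $|M|$ is odd; equivalently, the first player wins if and only if play can be steered to a terminal maximal subgroup of odd order. I would prove both implications by exhibiting an explicit strategy that pins down this parity. The structural fact I would lean on throughout is monotonicity: the set $S$ of chosen elements only grows, so $\langle S\rangle$ is nondecreasing and, by Lagrange's Theorem, the order of the final maximal subgroup is divisible by $|\langle S\rangle|$ at every stage.

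For the sufficiency direction, I would assume such an odd-order element $g$ exists and have the first player open by selecting $g$. First I would observe that every maximal subgroup $M$ containing $g$ has odd order: were $M$ even, Cauchy's Theorem would supply an involution $t\in M$, and then $\langle g,t\rangle\le M\ne G$ would contradict the hypothesis $\langle g,t\rangle=G$. Since $g$ remains in $S$ for the rest of the game, the terminal maximal subgroup necessarily contains $g$ and is therefore odd, so the first player wins no matter how play continues. The only point needing care is the legality of the opening move, namely $\langle g\rangle\ne G$; this can fail only when $G$ is cyclic of odd order, in which case Corollary~\ref{cor:finiteOdd} already gives $\dng(G)=*1$ and the first player wins.

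For the necessity direction I would argue the contrapositive: assuming no such $g$ exists, I would give the second player a strategy forcing a terminal maximal subgroup of even order. Failure of the condition means that for every odd-order element $h$ there is an involution $t$ with $\langle h,t\rangle\ne G$. Now consider the first player's opening move $x_1$. If $x_1$ has even order, then $\langle x_1\rangle$ already has even order, and by monotonicity and Lagrange's Theorem the terminal subgroup is even regardless of subsequent play. If $x_1$ has odd order, the second player responds with an involution $t$ satisfying $\langle x_1,t\rangle\ne G$; such a $t$ exists by hypothesis and is legal, since $t\ne x_1$ and $\langle x_1,t\rangle\ne G$. After this response $\langle S\rangle$ contains an involution and hence has even order, so again the terminal subgroup is even. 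In either case the final subgroup has even order, the first player is the one eventually unable to move, and the second player wins.

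The individual steps are short, so I expect the main obstacle to be conceptual rather than computational: recognizing that a single committal move—the first player's choice of $g$, or the second player's one involution response—irrevocably fixes the parity of the outcome, because once an involution enters $S$ the even parity of the final subgroup is locked in by Lagrange's Theorem, while the generation hypothesis on $g$ forces odd parity on every maximal subgroup that can contain it. The remaining care is bookkeeping at the degenerate cases, namely the vacuously satisfied hypothesis when $G$ has odd order and the group $C_2$, which are reconciled directly with Corollary~\ref{cor:finiteOdd} and Proposition~\ref{prop:CyclicGroups}.
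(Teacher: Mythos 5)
Your proof is correct, but there is nothing in the paper to compare it against: the paper does not prove this proposition, it simply quotes it from Barnes~\cite{Barnes} as Theorem~1. What you have written is a complete, self-contained reconstruction of the argument that the paper only sketches informally in its introduction: the terminal positions of $\dng(G)$ are exactly the maximal subgroups, a play terminating at $M$ lasts exactly $|M|$ moves, so the whole game is a struggle over the parity of the terminal maximal subgroup. Your two strategy arguments correctly pin this parity down --- the hypothesis on $g$ forces every maximal subgroup containing $g$ to be odd (via Cauchy's Theorem), so playing $g$ first wins regardless of subsequent play, while the failure of the hypothesis lets the second player answer an odd-order opening move with a single involution $t$ satisfying $\langle x_1,t\rangle \neq G$, after which Lagrange's Theorem locks the terminal subgroup into even order. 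Your bookkeeping at the degenerate cases is also sound: the opening move $g$ can only be illegal when $G$ is odd cyclic, where Corollary~\ref{cor:finiteOdd} applies, and the contrapositive case implicitly forces $G$ to be even, so the required involution always exists. The one thing worth making explicit is that the ``if and only if'' follows from your two one-directional strategy constructions because exactly one player of a finite impartial game has a winning strategy; this is standard, but it is the hinge on which the biconditional turns.
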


Note that Condition~(\ref{item:BarnesCondition}) in the next proposition is the denial of the condition used in Proposition~\ref{prop:BarnesCriterion}.  
This new proposition reformulates Barnes' condition about elements in terms of maximal subgroups. 

\begin{proposition}\label{prop:DNGMaximalCover}
If $G$ is an even non-cyclic group, then the following are equivalent.
\begin{enumerate}
\item\label{item:BarnesCondition}  There is no element $g \in G$ of odd order such that $\langle g, t \rangle = G$ for every involution $t\in G$.
\item  The set of even maximal subgroups covers $G$.
\end{enumerate}
\end{proposition}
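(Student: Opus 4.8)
The plan is to prove the equivalence in its contrapositive form, establishing that the negations of the two conditions coincide. The negation of~(\ref{item:BarnesCondition}) is precisely Barnes' condition from Proposition~\ref{prop:BarnesCriterion}: there exists an element $g\in G$ of odd order with $\langle g,t\rangle=G$ for every involution $t\in G$. The negation of the second condition is that the even maximal subgroups fail to cover $G$, that is, some element of $G$ lies in no even maximal subgroup. I would show these two statements are equivalent by producing, from each, the witness that the other requires.

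For the implication that a non-covered element yields Barnes' witness, I would start with an element $g$ lying in no even maximal subgroup. The contrapositive of Corollary~\ref{cor:InEvenMaximal} then forces $g$ to have odd order, since any even-order element of a non-cyclic group lies in an even maximal subgroup. To see that $\langle g,t\rangle=G$ for every involution $t$, I would argue by contradiction: if $\langle g,t\rangle$ were proper, it would be contained in some maximal subgroup $M$, and $M$ would contain the involution $t$, so $M$ has even order by Lagrange's theorem—contradicting that $g$ avoids every even maximal subgroup. Hence $g$ serves as Barnes' witness.

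For the reverse implication, I would take Barnes' witness $g$, of odd order with $\langle g,t\rangle=G$ for every involution $t$, and show that $g$ lies in no even maximal subgroup, which immediately gives that the even maximal subgroups do not cover $G$. If $g$ belonged to an even maximal subgroup $M$, then Cauchy's theorem would supply an involution $t\in M$, whence $\langle g,t\rangle\leq M<G$, contradicting $\langle g,t\rangle=G$.

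The argument presents no genuine obstacle; the only point requiring care is the first implication, where one must observe that a non-covered element is automatically of odd order \emph{before} it can serve as Barnes' witness. This is exactly where Corollary~\ref{cor:InEvenMaximal} enters, and it is the step that silently relies on the non-cyclicity of $G$—without that hypothesis an odd-order generator could exist and the covering claim would fail.
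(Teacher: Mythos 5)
Your proposal is correct and takes essentially the same approach as the paper: both directions rest on the same two ingredients, namely Cauchy's Theorem producing an involution inside any even maximal subgroup containing an odd-order element, and Corollary~\ref{cor:InEvenMaximal} (where non-cyclicity enters) forcing a non-covered element to have odd order. The only cosmetic difference is that you phrase the direction from condition~(2) to condition~(1) contrapositively (a Barnes witness lies in no even maximal subgroup), whereas the paper argues it directly; the underlying mathematics is identical.
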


\begin{proof}
Let $\mathcal{E}$ be the set of even maximal subgroups.  Suppose $\cup \mathcal{E} = G$, and let $g \in G$ have odd order.  Then there is some $L \in \mathcal{E}$ such that $g \in L$.  By Cauchy's Theorem, there is a $t \in L$ of order $2$.  Then $\langle g, t \rangle \leq L < G$.  So, there can be no element $g \in G$ of odd order such that $\langle g, t\rangle = G$ for all involutions $t \in G$.

Suppose $\cup \mathcal{E} < G$. Then there is an element $g \in G\setminus\left(\cup \mathcal{E}\right)$; this $g$ must have odd order by Corollary~\ref{cor:InEvenMaximal}.  Now, let $t \in G$ have order $2$. If $\langle g, t \rangle$ is not equal to $G$, then $\langle g ,t \rangle$ is contained in a maximal subgroup $M$.  Since $t$ has even order, it follows that $M$ must also be even, which contradicts the fact that $g$ is not an element of any even maximal subgroup.  Therefore, $\langle g,t \rangle$ is not contained in any maximal subgroup, which implies that $\langle g,t \rangle = G$.  
\end{proof}

The next corollary follows from Propositions~\ref{prop:BarnesCriterion} and \ref{prop:DNGMaximalCover} since $\dng(G)=*0$ if and only if the second player has a winning strategy.

\begin{corollary}
\label{cor:0iffCovering}
If $G$ is an even non-cyclic group, then the following are equivalent.
\begin{enumerate}
\item The set of even maximal subgroups covers $G$.
\item Every element of $G$ of odd order is in a proper even subgroup of $G$.
\item $\dng(G)=*0$.
\end{enumerate}
\end{corollary}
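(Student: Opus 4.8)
The plan is to prove Corollary~\ref{cor:0iffCovering} by establishing a cycle of implications among its three conditions, leveraging the two prior results it cites. The statement concerns an even non-cyclic group $G$, and the goal is to show that the three conditions---$(1)$ the even maximal subgroups cover $G$, $(2)$ every odd-order element lies in a proper even subgroup, and $(3)$ $\dng(G)=*0$---are all equivalent.

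First I would handle the equivalence of $(1)$ and $(3)$, which is essentially immediate from the results already assembled. By Proposition~\ref{prop:DNGMaximalCover}, condition~$(1)$ is equivalent to the denial of Barnes' condition (Condition~(\ref{item:BarnesCondition})). By Proposition~\ref{prop:BarnesCriterion}, Barnes' condition holds exactly when the first player wins $\dng(G)$, so its denial holds exactly when the second player wins. Finally, as recalled in the preliminaries, the second player has a winning strategy if and only if $\nim(\dng(G))=0$, i.e. $\dng(G)=*0$. Chaining these gives $(1)\Leftrightarrow(3)$ directly, and indeed this is exactly the observation flagged in the sentence preceding the corollary.

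Next I would close the loop by relating $(2)$ to the others. For $(1)\Rightarrow(2)$: if the even maximal subgroups cover $G$, then any element $g$ of odd order lies in some even maximal subgroup, which is a proper even subgroup, giving~$(2)$ (note $g$ itself has odd order, so it is the ambient subgroup that supplies the evenness). For $(2)\Rightarrow(1)$, I would argue via Corollary~\ref{cor:InEvenMaximal}: every element of even order already lies in an even maximal subgroup by that corollary, so to cover $G$ it suffices to cover the odd-order elements; condition~$(2)$ places each odd-order element in a proper even subgroup, which is contained in a maximal subgroup that is then necessarily even (any maximal subgroup containing an even proper subgroup has even order by Lagrange's Theorem). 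Hence every element of $G$ lies in some even maximal subgroup, establishing~$(1)$.

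I expect no serious obstacle here, since the corollary is a repackaging of machinery already proved; the only point requiring mild care is the $(2)\Rightarrow(1)$ direction, where one must separately dispatch the even-order elements (via Corollary~\ref{cor:InEvenMaximal}) and the odd-order elements (via the hypothesis plus Lagrange), and verify that a proper even subgroup is always contained in an \emph{even} maximal subgroup rather than merely some maximal subgroup. Assembling the implications as $(3)\Leftrightarrow(1)\Rightarrow(2)\Rightarrow(1)$ then yields the full equivalence.
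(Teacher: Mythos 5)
Your proposal is correct and follows essentially the same route as the paper: the paper's proof is a single sentence deriving the equivalence of (1) and (3) from Propositions~\ref{prop:BarnesCriterion} and~\ref{prop:DNGMaximalCover} together with the fact that $\dng(G)=*0$ if and only if the second player wins. Your explicit treatment of condition (2)---trivial in one direction, and via Corollary~\ref{cor:InEvenMaximal} plus Lagrange's Theorem in the other---correctly fills in an equivalence the paper leaves implicit (there it amounts to restating the denial of Barnes' condition using Cauchy's Theorem), so it is a sound completion rather than a different approach.
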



The following proposition shows that we only need to consider groups $G$ where the minimum number of generators $d(G)$ is 2.  One can prove this by slightly modifying the proof of~\cite[Theorem~1]{Barnes}, although we provide a different proof here.

\begin{proposition}\label{prop:bigdG}
If $G$ is an even group satisfying $\spr(G) \geq 3$, then $\dng(G)=*0$.
\end{proposition}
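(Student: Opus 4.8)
The plan is to invoke the covering reformulation from Corollary~\ref{cor:0iffCovering}. First I would observe that $\spr(G)\ge 3$ forces $G$ to be non-cyclic, since any cyclic group satisfies $\spr(G)\le 1$; combined with the standing hypothesis that $G$ is even, this places us squarely in the setting of Corollary~\ref{cor:0iffCovering}. Consequently it suffices to verify condition~(2) of that corollary, namely that every element of $G$ of odd order lies in a proper even subgroup.

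Next I would fix an arbitrary element $g\in G$ of odd order and produce such a subgroup explicitly. Since $|G|$ is even, Cauchy's Theorem supplies an involution $t\in G$. The subgroup $\langle g,t\rangle$ is even because it contains $t$, which has order $2$. The crucial point is that it is also proper: if instead $\langle g,t\rangle=G$, then $G$ would be generated by the two elements $g$ and $t$, contradicting $\spr(G)\ge 3$. Hence $g$ lies in the proper even subgroup $\langle g,t\rangle$.

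Because $g$ was arbitrary, condition~(2) of Corollary~\ref{cor:0iffCovering} holds, and therefore $\dng(G)=*0$. I do not anticipate a serious obstacle here: the entire content is the reduction to the covering criterion, after which the defining feature of $\spr(G)\ge 3$---that no pair of elements can generate $G$---does all the work in a single line. The only point requiring genuine care is confirming that the hypotheses of Corollary~\ref{cor:0iffCovering} actually apply, which is why I would open the argument by ruling out the cyclic case.
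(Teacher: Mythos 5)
Your proof is correct, and it leans on the same key result as the paper---Corollary~\ref{cor:0iffCovering}---but routes through it differently. The paper argues contrapositively via condition~(1) of that corollary: assuming the even maximal subgroups fail to cover $G$, it takes an involution $t$, notes that the (even) maximal subgroups containing $t$ also fail to cover $G$, picks $x$ outside their union, and concludes $\langle t,x\rangle=G$, hence $\spr(G)\le 2$. You instead verify condition~(2) directly: for each odd-order $g$ you exhibit the explicit proper even subgroup $\langle g,t\rangle$, with properness being immediate from $\spr(G)\ge 3$. Your version is arguably cleaner---it avoids any reasoning about maximal subgroups (the step that a proper subgroup lies inside a maximal one containing $t$ is not needed), using only Cauchy, Lagrange, and the definition of $\spr(G)$---and it also makes explicit the non-cyclicity check ($\spr(G)\ge 3$ rules out cyclic groups) needed to invoke Corollary~\ref{cor:0iffCovering}, a hypothesis the paper's proof uses only implicitly. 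What the paper's contrapositive formulation buys is a slightly sharper statement in spirit: it shows that failure of the covering condition forces $\spr(G)\le 2$, which is the observation the authors attribute to a modification of Barnes' argument.
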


\begin{proof}
Suppose that the set of even maximal subgroups fails to cover $G$, and let $t \in G$ have order $2$. 
Let $\mathcal{N}$ be the set of maximal subgroups containing $t$. This is a nonempty set of even subgroups.
Because the entire set of even maximal subgroups fails to cover $G$, the set $\mathcal{N}$ must also fail to cover $G$.  
But then $\langle t,x \rangle = G$ for all $x \in G \setminus \bigcup \mathcal{N}$, so $\spr(G) \leq 2$.  
The result now follows from Corollary~\ref{cor:0iffCovering} since the set of even maximal subgroups covers $G$ when $\spr(G) \geq 3$.
\end{proof}

\begin{exam}
One can easily verify using GAP~\cite{GAP} that $\spr(G)=3$ if $G=\SYM(4)\times \SYM(4)\times \SYM(4)$, where $\SYM(k)$ is the symmetric group on $k$ letters.  Hence $\dng(G)=*0$.
\end{exam}


\end{section}


\begin{section}{Classification of avoidance games}\label{section:MainResults}


We seek a way to determine the nim-number of $\dng(G)$ for a finite group $G$ from covering properties of the maximal subgroups of $G$. 
We first recall the following result.

\begin{proposition}\cite[Proposition 3.20]{ErnstSieben}
\label{prop:spectrum}
The type of a structure class of $\dng(G)$ is in
\[
\{(0, 0, 1), (1, 0, 1), (1, 1, 0), (1, 3, 2) \}.
\]
\end{proposition}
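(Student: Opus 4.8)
The plan is to argue by structural induction over the structure digraph, which is finite and acyclic with terminal vertices exactly the classes $X_M$ for $M$ a maximal subgroup. I will use the fact (encoded in Figure~\ref{fig:type}) that $\type(X_I)$ is determined by $\pty(I)$ together with the types of the options of $X_I$. Concretely, writing $A$ and $B$ for the sets of middle and last coordinates of $\{\type(X_J)\mid X_J\in\opt(X_I)\}$, the even and odd nim-values $\alpha,\beta$ of $X_I$ satisfy $\alpha=\mex(B)$ and $\beta=\mex(A\cup\{\alpha\})$ when $\pty(I)=0$, and $\beta=\mex(A)$ and $\alpha=\mex(B\cup\{\beta\})$ when $\pty(I)=1$. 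These recursions can be read off by evaluating $\nim$ first at the full subgroup $I$ (whose only options are cross-class, since one must add an element outside $I$) and then at a proper subset of $I$ (which additionally has a within-class option of the opposite parity). For the base case, a terminal $X_M$ has $A=B=\emptyset$, so the recursion gives $\type(X_M)=(1,1,0)$ if $M$ is odd and $(0,0,1)$ if $M$ is even, both of which lie in the claimed set.

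For the inductive step, suppose every option of $X_I$ has type in $\{(0,0,1),(1,0,1),(1,1,0),(1,3,2)\}$. Reading off middle and last coordinates, each option contributes one of the pairs $(\alpha_J,\beta_J)\in\{(0,1),(1,0),(3,2)\}$, where $(0,1)$ comes from $(0,0,1)$ or $(1,0,1)$, the pair $(1,0)$ from $(1,1,0)$, and $(3,2)$ from $(1,3,2)$. When $\pty(I)=1$ I would simply run through the at most seven nonempty subsets of $\{(0,1),(1,0),(3,2)\}$ that can occur, form $A$ and $B$, and compute $\beta=\mex(A)$ and then $\alpha=\mex(B\cup\{\beta\})$ in each case; a short check shows every outcome again lies in the claimed set, with the exotic type $(1,3,2)$ appearing precisely when both $(0,1)$ and $(1,0)$ occur among the options (there one gets $\beta=\mex\{0,1\}=2$ and $\alpha=\mex\{0,1,2\}=3$).

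The delicate case is $\pty(I)=0$, and this is where I expect the main obstacle. The raw recursion applied to an arbitrary collection of option types can produce $(0,1,0)$ — for instance if every option had type $(1,1,0)$ — and $(0,1,0)$ is not in the list, so I must show this combination never arises. This rests on a structural observation: if $I$ is even, then every option class $X_J$ of $X_I$ satisfies $J\supseteq I$. Indeed, an option of the position $I$ has the form $I\cup\{g\}$, whose structure class is $X_J$ with $J$ the smallest intersection subgroup containing $I\cup\{g\}$, so $J\supseteq I$. Since $I$ is even it contains an involution by Cauchy's Theorem, whence $J$ contains that involution and is itself even; by the inductive hypothesis the only even type available is $(0,0,1)$, so \emph{every} option of $X_I$ has type $(0,0,1)$. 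Then $A=\{0\}$ and $B=\{1\}$, giving $\alpha=\mex\{1\}=0$ and $\beta=\mex(\{0\}\cup\{0\})=1$, that is $\type(X_I)=(0,0,1)$. Thus in every case $\type(X_I)$ lies in the claimed set, completing the induction; the crux is exactly this last step, namely that even intersection subgroups admit only even option classes, which is what rules out the spurious type $(0,1,0)$.
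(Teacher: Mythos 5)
Your proposal is correct, but there is an important point of comparison to flag first: this paper does not prove Proposition~\ref{prop:spectrum} at all --- it is imported verbatim from \cite[Proposition~3.20]{ErnstSieben} --- so the only in-paper justification is the citation, and what you have written is essentially a self-contained reconstruction of the structural induction behind the cited result. On its merits, your argument works. The recursion you set up ($\alpha=\mex(B)$, $\beta=\mex(A\cup\{\alpha\})$ when $\pty(I)=0$, and symmetrically when $\pty(I)=1$) is exactly the calculus the paper summarizes in Figure~\ref{fig:type}, and your derivation of it is legitimate given the two facts the paper quotes from \cite{ErnstSieben}: positions of equal parity in a class share a nim-value, and the option relation is compatible with the partition $\mathcal{X}$, so every position of $X_I$ has cross-class options in precisely the classes $X_J\in\opt(X_I)$, while within-class options of opposite parity exist exactly at proper subsets of $I$. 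The base case is right because the terminal classes are precisely the $X_M$ with $M$ maximal, and the induction is well-founded since every option class strictly contains $I$, making the structure digraph acyclic. Your seven-case computation for odd $I$ is complete and correct, including the observation that $(1,3,2)$ arises exactly when options of both kinds $(0,1)$ and $(1,0)$ are present. Most importantly, you correctly identified and closed the one step where the claim could fail: for even $I$ the spurious outcome $(0,1,0)$ must be excluded, and your argument does this by noting that $X_J\in\opt(X_I)$ forces $I\subseteq I\cup\{g\}\subseteq J$ for some $g$, so $J$ inherits evenness --- here Lagrange alone suffices, since $|I|$ divides $|J|$, and the appeal to Cauchy is unnecessary --- whence by induction every option has type $(0,0,1)$ and the recursion returns $(0,0,1)$. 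This closure property of even classes is precisely the mechanism that keeps the type spectrum inside the four-element set, and it matches the reasoning this paper itself reuses elsewhere (for instance in Proposition~\ref{prop:TypeOnlyInOddMaximals} and Corollary~\ref{cor:evenFrattini}).
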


The following allows us to complete the determination of nim-numbers for all avoidance games.

\begin{proposition}
\label{prop:NotCycNotCov}
Let $G$ be an even non-cyclic group. Then $\dng(G)=*3$ if and only if the set of even maximal subgroups does not cover $G$.
\end{proposition}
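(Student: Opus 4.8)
The plan is to prove the biconditional by handling the two directions separately, leveraging the classification already established. For the reverse direction, suppose the set of even maximal subgroups does not cover $G$. Then by Corollary~\ref{cor:0iffCovering} we immediately know $\dng(G) \neq *0$, so it suffices to pin down which of the remaining admissible nim-numbers occurs. By Proposition~\ref{prop:spectrum}, the type of $X_{\Phi(G)}$ lies in $\{(0,0,1),(1,0,1),(1,1,0),(1,3,2)\}$, and since the nim-number of the game is the second component of $\type(X_{\Phi(G)})$, the possible nonzero values are $*1$ or $*3$. I would rule out $*1$ using Proposition~\ref{prop:bigdG}: if $\dng(G) \neq *0$, then $G$ must satisfy $\spr(G) = 2$, so $G$ is genuinely $2$-generated. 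The key structural fact is that since the even maximals fail to cover $G$, there is an element $g$ of odd order lying outside every even maximal subgroup, and by the proof of Proposition~\ref{prop:DNGMaximalCover}, $\langle g, t\rangle = G$ for every involution $t$.

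For the forward direction, I would argue the contrapositive: if the even maximal subgroups \emph{do} cover $G$, then by Corollary~\ref{cor:0iffCovering} we get $\dng(G) = *0 \neq *3$. This direction is essentially immediate from the corollary.

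The substance of the argument is therefore showing that the non-covering case forces precisely $*3$ rather than $*1$. The natural approach is to analyze the type of the source structure class $X_{\Phi(G)}$ directly via the type-propagation rule encoded in Figure~\ref{fig:type} and Proposition~\ref{prop:spectrum}. Using the element $g$ of odd order with $\langle g,t\rangle = G$ for all involutions $t$, I would identify the options available from the starting position and track how the four admissible types combine. Concretely, I expect to show that $X_{\Phi(G)}$ has at least one option of type with nim-value forcing the mex computation for the even positions to land on $3$ while the odd positions land on $2$, yielding $\type(X_{\Phi(G)}) = (0,3,2)$ or the appropriate even-parity analogue giving second component $3$. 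Since $\Phi(G)$ is the intersection of all maximal subgroups and $G$ has even order but admits odd-order elements outside all even maximals, the parity bookkeeping must reconcile the presence of both odd maximal subgroups (those containing $g$) and even ones.

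The main obstacle will be the detailed case analysis of the types of the options of $X_{\Phi(G)}$ and verifying that the $\mex$ computation produces exactly $3$. This requires understanding which structure classes are reachable in one move and invoking the compatibility of the partition $\mathcal{X}$ with the option relation (\cite[Corollary~3.11]{ErnstSieben}), so that the type of $X_{\Phi(G)}$ is well-defined from its option types. The cleanest route may be to combine this with Proposition~\ref{prop:TypeOnlyInOddMaximals}: the odd maximal subgroups containing $g$ contribute options of type $(1,1,0)$, while the covering failure guarantees additional options whose types force the value $3$ rather than $1$, since type $(1,1,0)$ alone would collapse the game to $*1$ as in the odd-order case.
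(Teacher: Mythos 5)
Your skeleton matches the paper's own proof: the forward direction is exactly the contrapositive application of Corollary~\ref{cor:0iffCovering}, and the reverse direction is an analysis of $\type(X_{\Phi(G)})$ against the spectrum of Proposition~\ref{prop:spectrum}, with the singleton $\{g\}$ supplying an option of type $(1,1,0)$ via Proposition~\ref{prop:TypeOnlyInOddMaximals}. However, the reverse direction---where all the substance lies---has genuine gaps. First, you never determine the parity of $\Phi(G)$, and one of your two candidate target types, $(0,3,2)$, is impossible: it does not lie in the four-element spectrum you yourself invoke, so there is no ``even-parity analogue'' of $(1,3,2)$. The paper settles this at the outset: since $G$ is non-cyclic, the maximal subgroups cover $G$ (Lemma~\ref{lem:CoverIffNoncyclic}), so the element $g$ avoiding all even maximal subgroups lies in some maximal subgroup, which is therefore odd; as $\Phi(G)$ is contained in that odd subgroup, $X_{\Phi(G)}$ has parity $1$ and the only admissible type with second component $3$ is $(1,3,2)$. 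Second, you misattribute the source of the other needed option when you say ``the covering failure guarantees additional options whose types force the value $3$.'' Covering failure supplies only $g$; the second option comes from the evenness of $G$: Cauchy's Theorem gives an involution $t$, non-cyclicity makes $\{t\}$ a legal option of $\emptyset$, and its structure class $X_J$ is even by Lagrange (as $t\in J$), hence of type $(0,0,1)$, the unique even-parity type in the spectrum.

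Third, the decisive computation is only promised, never performed. The crux is to check that once $\opt(X_{\Phi(G)})$ contains classes of types $(1,1,0)$ and $(0,0,1)$, the odd class $X_{\Phi(G)}$ must have type $(1,3,2)$ \emph{regardless} of which other admissible types occur among its options: the odd positions of $X_{\Phi(G)}$ see even positions with nim-values $1$ and $0$ (and possibly $0$ or $3$ from types $(1,0,1)$ and $(1,3,2)$), giving $\mex=2$, and the even positions then see odd positions with nim-values $0$, $1$, and $2$ (and possibly $1$ or $2$ again), giving $\mex=3$. Without this verification the argument does not force $*3$ over $*1$, which is precisely the dichotomy you set up. Finally, your detour through Proposition~\ref{prop:bigdG} is dead weight: concluding $\spr(G)=2$ does nothing to rule out $*1$, and the fact that $\langle g,t\rangle=G$ for every involution $t$ (from Proposition~\ref{prop:DNGMaximalCover}) is never used in the type analysis; the paper needs neither.
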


\begin{proof}
The contrapositive of the forward direction follows immediately from Corollary~\ref{cor:0iffCovering}. Now, assume that the set of even maximal subgroups does not cover $G$.
Since $G$ is non-cyclic, the set of maximal subgroups covers $G$, so there must be an odd maximal subgroup and hence $X_{\Phi(G)}$ is odd. We will prove that $\type(X_{\Phi(G)})=(1,3,2)$ by showing that both $(0, 0, 1)$ and $(1, 0, 1)$ are contained in 
the set \[T:=\{\type(X_I) \mid X_I \in \opt(X_{\Phi(G)})\}.\]    
An easy calculation shows that this is sufficient to determine the type of $X_{\Phi(G)}$, regardless of whether there are other elements in $T$ because $T$ is 
necessarily contained in the four-element set given in Proposition~\ref{prop:spectrum}.  Since $\emptyset \in X_{\Phi(G)}$, it will suffice to show 
that $\emptyset$ has options of type $(0,0,1)$ and $(1,0,1)$.

By Cauchy's Theorem, there is an involution $t \in G$. 
Because the set of even maximal subgroups does not cover $G$, there is an element $g \in G$ that is not contained in any even maximal subgroup.  
Because $G$ is non-cyclic, both $\{g\}$ and $\{t\}$ are options of the initial position $\emptyset$.  Let $\{g\}\in X_I$ and $\{t\}\in X_J$.
Because $g$ is only contained in odd maximal subgroups and $g\in I$, 
$\type(X_{I})=(1,1,0)$ by Proposition~\ref{prop:TypeOnlyInOddMaximals}. We also know that $X_{J}$ is even, so $\type(X_{J})=(0,0,1)$.   
\end{proof}

The following theorem is a complete categorization of the possible values of $\dng(G)$. 

\begin{theorem}\label{thm:DNGClassification}
Let $G$ be a nontrivial finite group.
\begin{enumerate}
\item  If $|G|=2$ or $G$ is odd, then $\dng(G)=*1$.
\item\label{thm:DNGClassification-2}  If $G\cong \mathbb{Z}_{4n}$ or the set of even maximal subgroups covers $G$, then $\dng(G)=*0$. 
\item Otherwise, $\dng(G)=*3$.
\end{enumerate}
\end{theorem}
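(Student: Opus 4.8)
The plan is to prove all three statements by a case analysis that reduces each to results already in hand, organized around the cyclic/non-cyclic dichotomy together with a parity and divisibility check; since every ingredient is available, the work is essentially logical bookkeeping. I would begin with part~(1). If $G$ is odd, then $\dng(G)=*1$ is exactly Corollary~\ref{cor:finiteOdd}. If instead $|G|=2$, then $G$ is an even cyclic group with $|G|=2$, so the first line of Proposition~\ref{prop:CyclicGroups} gives $\dng(G)=*1$.

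For part~(2) I would split on the two disjuncts in the hypothesis. If $G\cong\mathbb{Z}_{4n}$, then $G$ is even cyclic with $|G|=4n\equiv_4 0$, and the third line of Proposition~\ref{prop:CyclicGroups} yields $\dng(G)=*0$. If instead the set of even maximal subgroups covers $G$, the key preliminary observation is that this hypothesis already forces $G$ to be both even and non-cyclic: the mere existence of an even maximal subgroup makes $|G|$ even by Lagrange's Theorem, and any covering of $G$ by maximal subgroups forces non-cyclicity by Lemma~\ref{lem:CoverIffNoncyclic}. With $G$ known to be even and non-cyclic, the equivalence of conditions (1) and (3) in Corollary~\ref{cor:0iffCovering} immediately gives $\dng(G)=*0$.

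For part~(3), I would first unpack the word ``otherwise'' into the conjunction of the negations of the hypotheses of~(1) and~(2): namely that $G$ is even, $|G|\neq 2$, $G\not\cong\mathbb{Z}_{4n}$, and the even maximal subgroups do not cover $G$. I would then split once more on cyclic versus non-cyclic. In the non-cyclic case the conclusion $\dng(G)=*3$ is precisely Proposition~\ref{prop:NotCycNotCov}, whose hypotheses (even, non-cyclic, even maximal subgroups not covering) are exactly what we have. In the cyclic case, $G$ is even cyclic with $|G|\neq 2$; since $G\not\cong\mathbb{Z}_{4n}$ we have $4\nmid|G|$, hence $2\neq|G|\equiv_4 2$, and the middle line of Proposition~\ref{prop:CyclicGroups} gives $\dng(G)=*3$. (One should note here that for a cyclic group the covering condition is automatically false by Lemma~\ref{lem:CoverIffNoncyclic}, so this cyclic subcase is genuinely contained in the ``otherwise'' hypothesis and requires no extra assumption.)

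I do not expect a genuine mathematical obstacle; the only care required is the logical check that the three cases truly partition the nontrivial finite groups, so that the statement defines $\dng(G)$ unambiguously. Concretely, I would verify that the hypotheses of~(1) and~(2) are mutually exclusive: no odd group and no group of order~$2$ admits an even maximal subgroup, and $\mathbb{Z}_{4n}$ is neither odd nor of order~$2$. This, together with matching each subcase to the correct line of Proposition~\ref{prop:CyclicGroups} or the correct earlier corollary, is the entire content of the argument.
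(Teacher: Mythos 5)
Your proposal is correct and follows essentially the same route as the paper's proof: the same three-way case split, handled by the same ingredients (Corollary~\ref{cor:finiteOdd} and Proposition~\ref{prop:CyclicGroups} for case~(1), Proposition~\ref{prop:CyclicGroups} and Corollary~\ref{cor:0iffCovering} for case~(2), and a cyclic/non-cyclic split using Proposition~\ref{prop:CyclicGroups} and Proposition~\ref{prop:NotCycNotCov} for case~(3)). The only difference is cosmetic: you spell out the check, via Lagrange and Lemma~\ref{lem:CoverIffNoncyclic}, that the covering hypothesis forces $G$ to be even and non-cyclic before invoking Corollary~\ref{cor:0iffCovering}, a point the paper handles implicitly in the proof and in the remark following the theorem.
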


\begin{proof}
The first case is handled by Proposition~\ref{prop:CyclicGroups} and Corollary~\ref{cor:finiteOdd}, respectively. 
For the second case, note that both conditions imply that $G$ has even order. Then the result follows from Proposition~\ref{prop:CyclicGroups} and Corollary~\ref{cor:0iffCovering}, respectively.

For the final case, assume that the conditions of Items~(1) and (2) are not met. If $G$ is cyclic, then $G\cong \mathbb{Z}_{4n+2}$ for some $n \geq 1$. Thus, $\dng(G)=*3$ by Proposition~\ref{prop:CyclicGroups}.  
On the other hand, if $G$ is not cyclic, then Proposition~\ref{prop:NotCycNotCov} implies that $\dng(G)=*3$, as well.
\end{proof}

Note that the set of maximal subgroups of a cyclic group never covers the group by Lemma~\ref{lem:CoverIffNoncyclic}. 
Therefore, exactly one of the hypotheses of Theorem~\ref{thm:DNGClassification}(\ref{thm:DNGClassification-2}) will hold.

\begin{corollary}\label{cor:DNGCriteria}
Let $G$ be a nontrivial finite group.
\begin{enumerate}
\item If all maximal subgroups of $G$ are odd, then $\dng(G)=*1$.
\item If all maximal subgroups of $G$ are even, then $\dng(G)=*0$.
\item Assume $G$ has both even and odd maximal subgroups.
  \begin{enumerate}
  \item If the set of even maximal subgroups covers $G$, then $\dng(G)=*0$.
  \item If the set of even maximal subgroups does not cover $G$, then $\dng(G)=*3$.
  \end{enumerate}
\end{enumerate}
\end{corollary}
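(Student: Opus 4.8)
The plan is to derive Corollary~\ref{cor:DNGCriteria} as a direct consequence of the classification Theorem~\ref{thm:DNGClassification}, reorganizing the case analysis around the parities of the maximal subgroups rather than around cyclicity. The main observation is that each of the three enumerated cases is mutually exclusive and that together they exhaust all nontrivial finite groups, so I only need to check that the hypotheses of each case force the corresponding clause of the theorem.

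First I would handle Case~(1), where all maximal subgroups are odd. Here I claim $G$ must be odd: by Lagrange's Theorem the order of every maximal subgroup divides $|G|$, but this alone does not immediately bound $|G|$, so instead I would invoke Cauchy's Theorem in the contrapositive. If $|G|$ were even, then $G$ would contain an involution $t$, and since $\langle t\rangle\ne G$ (as $|G|=2$ is the only way equality could hold, handled separately), $t$ lies in some maximal subgroup, which would then be even by Lagrange — a contradiction. The lone exceptional case $|G|=2$ has its unique maximal subgroup equal to the trivial group, which is odd, and indeed $\dng(\mathbb{Z}_2)=*1$ by Proposition~\ref{prop:CyclicGroups}. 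In all other subcases Theorem~\ref{thm:DNGClassification}(1) gives $\dng(G)=*1$.

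Next I would treat Case~(2), where all maximal subgroups are even. Then $G$ cannot be odd (an odd group of order greater than one has an odd maximal subgroup by the same Cauchy/Lagrange argument, and the trivial group is excluded), so $|G|$ is even; moreover the empty covering condition is trivially satisfied since the set of even maximal subgroups is simply the set of all maximal subgroups, which covers $G$ whenever $G$ is non-cyclic by Lemma~\ref{lem:CoverIffNoncyclic}. For the cyclic subcase I would use Corollary~\ref{cor:CyclicEvenMaximals}: every maximal subgroup of a cyclic $G$ is even precisely when $4\mid |G|$, i.e.\ $G\cong\mathbb{Z}_{4n}$. Either way the hypothesis of Theorem~\ref{thm:DNGClassification}(\ref{thm:DNGClassification-2}) holds, giving $\dng(G)=*0$. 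Finally, Case~(3) assumes both parities occur among the maximal subgroups; such a $G$ is automatically even and not of the form $\mathbb{Z}_{4n}$, so neither the first nor the second clause of the theorem applies, and the outcome is governed entirely by whether the even maximal subgroups cover $G$ — subcase~(a) yields $*0$ and subcase~(b) yields $*3$ directly from Theorem~\ref{thm:DNGClassification}.

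The routine work is just the parity bookkeeping via Cauchy's and Lagrange's Theorems; the only point demanding care is reconciling the cyclic and non-cyclic descriptions of the covering condition, which is precisely what Corollary~\ref{cor:CyclicEvenMaximals} and Lemma~\ref{lem:CoverIffNoncyclic} were set up to supply, so I expect no genuine obstacle beyond confirming that the three cases partition the class of nontrivial finite groups.
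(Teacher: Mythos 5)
Your proposal is correct and takes essentially the same approach as the paper: both derive the corollary from Theorem~\ref{thm:DNGClassification}, using Corollary~\ref{cor:CyclicEvenMaximals} for the cyclic subcase and Lemma~\ref{lem:CoverIffNoncyclic} for the non-cyclic one, with the case analysis organized by the parities of the maximal subgroups. The only differences are cosmetic: you spell out the Cauchy/Lagrange argument that the paper leaves implicit in ``it must be that $|G|$ is $2$ or odd,'' and in case (3)(a) you invoke the covering clause of Theorem~\ref{thm:DNGClassification}(\ref{thm:DNGClassification-2}) directly where the paper routes through Corollary~\ref{cor:0iffCovering}.
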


\begin{proof}
Let $G$ be a finite group.  If all maximal subgroups of $G$ are odd, then it must be that $|G|$ is $2$ or odd, so $\dng(G)=*1$ by Theorem~\ref{thm:DNGClassification}.  If all maximal subgroups of $G$ are even and $G$ is cyclic, then $|G|$ is a multiple of $4$ by Corollary~\ref{cor:CyclicEvenMaximals} and $\dng(G)=*0$.  
If all maximal subgroups of $G$ are even and $G$ is not cyclic, then the set of maximal subgroups of $G$ covers $G$ and $\dng(G)=*0$ by Theorem~\ref{thm:DNGClassification}.  So, we may suppose that $G$ has both even and odd maximal subgroups.  

If the set of even maximal subgroups covers $G$, then $G$ cannot be cyclic by Lemma~\ref{lem:CoverIffNoncyclic}, and hence $\dng(G)=*0$ by Corollary~\ref{cor:0iffCovering}.  If the set of even maximal subgroups does not cover $G$, then $\dng(G)=*3$ by Theorem~\ref{thm:DNGClassification}. 
\end{proof}

\begin{exam}
All four cases of Corollary~\ref{cor:DNGCriteria} can occur.  All maximal subgroups of $\ZZ_3$ are odd, so $\dng(\ZZ_3)=*1$.  All maximal subgroups of $\ZZ_4$ are even, so $\dng(\ZZ_4)=*0$.  The group $G:=\ZZ_2 \times \ZZ_3 \times \ZZ_3$ has maximal subgroups of both parities with the set of even maximal subgroups covering $G$, so $\dng(G)=*0$.  The group $\SYM(3)$ has maximal subgroups of both parities with the set of even maximal subgroups failing to cover $\SYM(3)$, so $\dng(\SYM(3))=*3$.
\end{exam}

Corollary~\ref{cor:DNGCriteria} relates to an amusing result.   

\begin{proposition}
If the parity of all maximal subgroups of $G$ is the same, then the outcome of $\dng(G)$ does not depend on the strategy of the players.
\end{proposition}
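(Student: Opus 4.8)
The plan is to argue directly at the level of individual plays rather than through nim-values: I would show that every complete line of play of $\dng(G)$ ends precisely when the selected set is the full element set of some maximal subgroup, so that the number of moves made in any play equals the order of that subgroup. The common-parity hypothesis then forces this number to have the same parity in \emph{every} play, which pins down the loser independently of how either player chooses to move.

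First I would verify that the terminal positions of $\dng(G)$ are exactly the maximal subgroups of $G$, each listed with all of its elements. Let $S$ be a terminal position, so $S$ is non-generating but every $g \in G \setminus S$ satisfies $\langle S \cup \{g\}\rangle = G$. Writing $H = \langle S \rangle \neq G$, if some $h \in H \setminus S$ existed then $S \cup \{h\}$ would be a legal option with $\langle S \cup \{h\}\rangle = H \neq G$, contradicting terminality; hence $S = H$ is a subgroup. The terminal condition then reads $\langle H, g \rangle = G$ for every $g \notin H$, which is precisely the assertion that $H$ is maximal. So every terminal position is a maximal subgroup $M$ together with all $|M|$ of its elements, and conversely each such set is terminal.

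Next I would count moves. Since the starting position is $\emptyset$ and each turn adjoins exactly one new element, any line of play terminating at $M$ consists of exactly $|M|$ moves. The first player makes the odd-numbered moves and the second player the even-numbered ones, so the player forced to move from the terminal position $M$ — and who therefore loses — is the first player when $|M|$ is even and the second player when $|M|$ is odd. Equivalently, the first player wins that play if and only if $\pty(M)=1$, matching the intuitive description of the game given in the introduction.

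Finally I would invoke the hypothesis. If all maximal subgroups of $G$ are odd, then every line of play ends at an odd $M$, so the first player wins no matter how the two players move; if all maximal subgroups are even, the second player always wins. In either case the winner is determined by $G$ alone, so the outcome is strategy-independent. The only step demanding care is the first one, identifying terminal positions with complete maximal subgroups; everything afterward is a parity count. I would note that this recovers, in a stronger strategy-free form, exactly cases~(1) and~(2) of Corollary~\ref{cor:DNGCriteria}, which give $\dng(G)=*1$ and $\dng(G)=*0$ respectively.
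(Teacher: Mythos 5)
Your proof is correct and follows essentially the same route as the paper's: every complete play must end with the selected set equal to a full maximal subgroup, so the parity of that subgroup alone decides the loser. The paper states this in two informal sentences, while you supply the supporting details (the characterization of terminal positions as maximal subgroups and the explicit move count), but the underlying argument is identical.
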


\begin{proof}
If every maximal subgroup is even, the players will inevitably end up choosing from a single even maximal subgroup. Thus, regardless of strategy, the second player will win. A similar argument holds if every maximal subgroup is odd. 
\end{proof}

The parity of the maximal subgroups are all the same for $p$-groups. 
It has been conjectured that almost every group is a $p$-group; if so, there is almost always no strategy that changes the outcome of the game.

\begin{corollary}\label{cor:evenFrattini}
If $G$ is a finite group with an even Frattini subgroup, then $\dng(G)=*0$.
\end{corollary}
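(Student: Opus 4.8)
The plan is to show that an even Frattini subgroup forces every maximal subgroup to be even, and then invoke the appropriate part of Corollary~\ref{cor:DNGCriteria}. The Frattini subgroup $\Phi(G)$ is by definition the intersection of all maximal subgroups of $G$, so $\Phi(G) \leq M$ for every maximal subgroup $M$. If $\Phi(G)$ has even order, then by Lagrange's Theorem $|\Phi(G)|$ divides $|M|$, and since $2 \mid |\Phi(G)|$ we conclude $2 \mid |M|$. Thus every maximal subgroup $M$ is even.

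Once every maximal subgroup is established to be even, the conclusion is immediate from Corollary~\ref{cor:DNGCriteria}(2), which states that $\dng(G)=*0$ in exactly this situation. So the argument reduces to the one-line divisibility observation combined with the definition of $\Phi(G)$ as an intersection of maximal subgroups.

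I should first address the degenerate possibility that $G$ has no maximal subgroups, but this cannot arise here: the trivial group is excluded throughout (the paper notes there is no avoidance game for it), and every nontrivial finite group has at least one maximal subgroup. Moreover, since $\Phi(G)$ is assumed even, it is in particular nontrivial, so $G$ is nontrivial and the intersection defining $\Phi(G)$ is taken over a nonempty collection of maximal subgroups. There are no real obstacles in this proof; the only point requiring a moment's care is confirming that the even-order hypothesis on the intersection $\Phi(G)$ genuinely transfers to each individual maximal subgroup, which it does precisely because $\Phi(G)$ is contained in every one of them and divisibility of orders is monotone under subgroup inclusion.
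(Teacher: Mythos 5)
Your proof is correct and matches the paper's own argument, which likewise observes that $\Phi(G) \leq M$ for every maximal subgroup $M$ forces each $M$ to be even and then appeals to the fact that a group with only even maximal subgroups has $\dng(G)=*0$. The extra care you take (Lagrange's Theorem, nontriviality of $G$) just makes explicit what the paper's one-line proof leaves implicit.
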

\begin{proof}
Since $\Phi(G)$ is even, every maximal subgroup must be even.  
\end{proof}

\begin{exam}
The Frattini subgroups of the general linear group $GL(2,3)$ and the special linear group $SL(2,3)$ are isomorphic to $\mathbb{Z}_2$. Hence the avoidance game on these groups has nim-number $0$.
\end{exam}

The following corollary is a restatement of the results above, but written as an ordered checklist of conditions, starting with what is easiest to verify.  Note that the hypothesis of Item~(\ref{item:PhiEven}) implies the hypothesis of Item~(\ref{item:AllEvenMaximals}), which in turn implies the hypothesis of Item~(\ref{item:EvenCovering}) if $G$ is non-cyclic, so there is some redundancy.  Items~(\ref{item:PhiEven}) and (\ref{item:AllEvenMaximals}) are listed separately because they are easier to check than Item~(\ref{item:EvenCovering}).  

\begin{corollary}
\label{cor:checklist}
Let $G$ be a nontrivial finite group. 
\begin{enumerate}
\item If $|G|=2$, then $\dng(G)=*1$.
\item If $G$ is odd, then $\dng(G)=*1$.
\item\label{item:PhiEven} If $\Phi(G)$ is even, then $\dng(G)=*0$.
\item\label{item:AllEvenMaximals} If every maximal subgroup of $G$ is even,  then $\dng(G)=*0$.
\item\label{item:EvenCovering} If the set of even maximal subgroups covers $G$, then $\dng(G)=*0$.
\item Otherwise, $\dng(G)=*3$.
\end{enumerate}
\end{corollary}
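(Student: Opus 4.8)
The plan is to read the list as a sequence of guarded implications: because the items are checked in order, arriving at the hypothesis of item~$k$ carries the implicit assumption that the hypotheses of items $1$ through $k-1$ all failed. For items (1) through (5), however, the ordering is not even needed, since each of these is a standalone implication that I can cite directly from the results already established. So the bulk of the work is simply to match each item with its source, and the only genuinely ordering-dependent case is the final ``Otherwise'' clause.

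First I would dispatch items (1) and (2): both are immediate from Theorem~\ref{thm:DNGClassification}(1), which gives $\dng(G)=*1$ whenever $|G|=2$ or $G$ is odd. Item (3) is exactly Corollary~\ref{cor:evenFrattini}, and item (4) is the ``all maximal subgroups even'' case of Corollary~\ref{cor:DNGCriteria}. For item (5), I would note that if the even maximal subgroups cover $G$ then in particular the full set of maximal subgroups covers $G$, so $G$ is non-cyclic by Lemma~\ref{lem:CoverIffNoncyclic}, and $\dng(G)=*0$ follows from Theorem~\ref{thm:DNGClassification}(2) (equivalently, from Corollary~\ref{cor:0iffCovering}).

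The only case that genuinely uses the ordering is item (6). Here I would unpack the failure of each preceding hypothesis: $|G|\neq 2$, the group $G$ is even (not odd), $\Phi(G)$ is odd, at least one maximal subgroup is odd, and the even maximal subgroups do not cover $G$. I then want to invoke Theorem~\ref{thm:DNGClassification}, whose third case yields $*3$ precisely when cases (1) and (2) both fail. Case (1) is ruled out since $|G|\neq 2$ and $G$ is not odd. For case (2), the non-covering hypothesis directly excludes the covering alternative, so the remaining task is to exclude $G\cong\mathbb{Z}_{4n}$.

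This last exclusion is the one step I expect to require a small argument rather than a bare citation, and it is the main obstacle---not in difficulty, but as the place where the checklist's covering language must be reconciled with the cyclic classification. If $G\cong\mathbb{Z}_{4n}$, then $4$ divides $|G|$, so by Corollary~\ref{cor:CyclicEvenMaximals} every maximal subgroup of $G$ would be even, contradicting the fact (from the failure of item (4)) that $G$ has an odd maximal subgroup. Hence $G\not\cong\mathbb{Z}_{4n}$, case (2) of Theorem~\ref{thm:DNGClassification} is excluded, and its third case gives $\dng(G)=*3$. The care needed is thus purely bookkeeping: verifying that the ``Otherwise'' case negates every earlier hypothesis and that the cyclic possibility $\mathbb{Z}_{4n}$, which is invisible in the covering formulation, is quietly eliminated through the maximal-subgroup parity of cyclic groups.
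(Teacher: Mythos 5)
Your proposal is correct and takes essentially the same approach as the paper, whose entire proof is a one-line citation matching items (1)--(6) to Proposition~\ref{prop:CyclicGroups}, Corollary~\ref{cor:finiteOdd}, Corollary~\ref{cor:evenFrattini}, Corollary~\ref{cor:DNGCriteria}(2), Corollary~\ref{cor:DNGCriteria}(3)(a), and Corollary~\ref{cor:DNGCriteria}(3)(b), respectively. The only difference is that for items (5) and (6) you cite Theorem~\ref{thm:DNGClassification} directly instead of Corollary~\ref{cor:DNGCriteria}, and so you correctly re-derive the small bookkeeping (non-cyclicity via Lemma~\ref{lem:CoverIffNoncyclic}, and the exclusion of $G\cong\mathbb{Z}_{4n}$ via Corollary~\ref{cor:CyclicEvenMaximals}) that the paper had already packaged inside the proof of Corollary~\ref{cor:DNGCriteria}(3).
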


\begin{proof}
Items (1)--(6) follow immediately from Proposition~\ref{prop:CyclicGroups}, Corollary~\ref{cor:finiteOdd}, Corollary~\ref{cor:evenFrattini}, Corollary~\ref{cor:DNGCriteria}(2), Corollary~\ref{cor:DNGCriteria}(3)(a), and Corollary~\ref{cor:DNGCriteria}(3)(b), respectively.
\end{proof}

\end{section}


\begin{section}{Applications}\label{section:Applications}


\subsection{Nilpotent Groups}


Recall that finite nilpotent groups are the finite groups that are isomorphic to the direct product of their Sylow subgroups. It immediately follows that all abelian groups are nilpotent.  The next proposition generalizes the result for abelian groups found in~\cite{ErnstSieben}. 

\begin{proposition}\label{prop:nilpotent}
If $G$ is a nontrivial finite nilpotent group, then
\[
\dng(G)=
\begin{dcases}
*1, & |G|=2 \text{ or } G \text{ is odd}\\
*3, & G\cong \mathbb{Z}_2 \times \mathbb{Z}_{2k+1}, k\ge 1 \\
*0, & \text{otherwise}. 
\end{dcases}
\]
\end{proposition}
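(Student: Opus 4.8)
The plan is to exploit the defining feature of nilpotency. Write $G = P \times H$, where $P$ is the Sylow $2$-subgroup with $|P| = 2^n$ and $H$ is the direct product of the odd Sylow subgroups, so that $|H|$ is odd. The three branches of the formula will correspond to $n = 0$, $n = 1$, and $n \geq 2$, with the $n = 1$ branch split further according to whether $H$ is cyclic. All of the work is then bookkeeping against the classification already established.

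First I would dispose of the easy branches. If $n = 0$ then $G$ is odd, and if $n = 1$ with $H$ trivial then $G \cong \mathbb{Z}_2$; in both situations Theorem~\ref{thm:DNGClassification}(1) gives $\dng(G) = *1$, matching the first line. If $n \geq 2$, then Proposition~\ref{prop:4DirectProduct} shows that every maximal subgroup of $G$ is even, so $\dng(G) = *0$ by Corollary~\ref{cor:checklist}(4), landing in the ``otherwise'' line.

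The substance lies in the case $n = 1$ with $H$ nontrivial, i.e.\ $P \cong \mathbb{Z}_2$ and $|H| = 2k+1$ for some $k \geq 1$. If $H$ is cyclic, then since $\gcd(2,|H|)=1$ we have $G \cong \mathbb{Z}_{2|H|}$ with $2|H| \equiv_4 2$ and $2|H| > 2$, so Proposition~\ref{prop:CyclicGroups} yields $\dng(G) = *3$; this is precisely the displayed case $G \cong \mathbb{Z}_2 \times \mathbb{Z}_{2k+1}$. It remains to show that when $H$ is non-cyclic we fall into the ``otherwise'' line $\dng(G) = *0$. Here $G$ is non-cyclic, since a direct factor of a cyclic group would itself be cyclic, so by Corollary~\ref{cor:0iffCovering} it suffices to verify that every element of odd order lies in a proper even subgroup. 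The elements of odd order are exactly those of the form $(0,h)$ with $h \in H$, and the unique involution is $t = (1,e)$. For each such $(0,h)$ I would take $\langle t, (0,h) \rangle = \mathbb{Z}_2 \times \langle h \rangle$: it is even, it contains $(0,h)$, and it is proper precisely because $\langle h \rangle \neq H$, which holds for \emph{every} $h$ since $H$ is non-cyclic. Hence the even maximal subgroups cover $G$ and $\dng(G) = *0$.

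I expect the only genuine obstacle to be this non-cyclic subcase of $n = 1$: the point is to recognize that the failure of any single element to generate $H$ is exactly what produces a proper even subgroup through each odd-order element, and that this is what distinguishes the value $*0$ from $*3$. Once the branches are assembled, $*3$ occurs if and only if $P \cong \mathbb{Z}_2$ and $H$ is cyclic and nontrivial, i.e.\ $G \cong \mathbb{Z}_2 \times \mathbb{Z}_{2k+1}$, so the three cases align with the three lines of the formula.
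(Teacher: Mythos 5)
Your proposal is correct and takes essentially the same approach as the paper: the same Sylow decomposition of $G$ into its $2$-part and odd part, Proposition~\ref{prop:4DirectProduct} for the case $4 \mid |G|$, Proposition~\ref{prop:CyclicGroups} for $\mathbb{Z}_2 \times \mathbb{Z}_{2k+1}$, and, in the remaining non-cyclic case, the same proper even subgroup $\mathbb{Z}_2 \times \langle h \rangle$ through each odd-order element, concluded via Corollary~\ref{cor:0iffCovering}. The only difference is organizational (you branch on the order of the Sylow $2$-subgroup rather than on the three lines of the statement), which is immaterial.
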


\begin{proof}
If $|G|=2$ or $G$ is odd, then $\dng(G)=*1$ by Theorem~\ref{thm:DNGClassification}. If $G \cong \ZZ_2 \times \ZZ_{2k+1}$ for some $k$, then it follows from Proposition~\ref{prop:CyclicGroups} that $\dng(G)=*3$.  

Lastly, we assume we are not in the first two cases;  then either $4$ divides $|G|$ or $G$ is non-cyclic of even order. We know $G \cong \prod_p Q_p$, where $Q_p$ is the Sylow $p$-subgroup of $G$ for the prime $p$.  
If $4$ divides $|G|$, then every maximal subgroup is even by Proposition~\ref{prop:4DirectProduct}, and hence $\dng(G)=*0$ by Corollary~\ref{cor:DNGCriteria}. 
So, suppose that $|Q_2|=2$ and $G$ is non-cyclic; then $\prod_{p\not=2} Q_p$ is not cyclic.  Let $g \in G$ have odd order.  Then $\langle g \rangle$ is a proper subgroup 
of $\prod Q_p$, so $Q_2 \times \langle g \rangle$ is a proper subgroup of $G$.  Thus, every element of odd order is contained in an even subgroup, so $\dng(G)=*0$ by Corollary~\ref{cor:0iffCovering}.
\end{proof}

\subsection{Generalized Dihedral Groups}


A group $G$ is said to be a generalized dihedral group if $G\cong A \rtimes \mathbb{Z}_2$ for some finite abelian $A$, where the action of the semidirect product is inversion.  In this case, we write $G=\dih(A)$ and we identify $A$ with the corresponding subgroup of $G$.  Note that $A$ has index $2$ in $G$, so $A$ is maximal in $G$. The proof of the following is a trivial exercise.

\begin{proposition}\label{prop:dihElements}
Every element of $\dih(A)$ that is not in $A$ has order $2$. 
\end{proposition}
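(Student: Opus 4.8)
The claim is that every element of $\dih(A)$ that lies outside the copy of $A$ must be an involution. Since $G = \dih(A) = A \rtimes \mathbb{Z}_2$ with the $\mathbb{Z}_2$ acting by inversion, the plan is to write a general element outside $A$ in the standard normal form and square it.

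Concretely, let $s$ denote a generator of the $\mathbb{Z}_2$ factor, so that $s^2 = e$ and the defining action gives the relation $s a s^{-1} = a^{-1}$ for every $a \in A$; equivalently $s a s = a^{-1}$ since $s = s^{-1}$. Because $A$ has index $2$ in $G$, every element of $G \setminus A$ can be written uniquely as $as$ for some $a \in A$. First I would record this normal form, noting that the coset $As$ is exactly the complement $G \setminus A$. Then I would compute directly:
\[
(as)(as) = a(sas)a^{-1}\cdot\text{(regroup)} = a\,(sas^{-1})\,s\cdot s = a\,a^{-1}\,s^2 = e,
\]
more cleanly, $(as)^2 = asas = a(sas^{-1})s^2 = a\,a^{-1}\,e = e$, using $s^2=e$, $s^{-1}=s$, and the inversion action $sas^{-1}=a^{-1}$. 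This shows $(as)^2 = e$, so $as$ has order dividing $2$; since $as \notin A$ it is not the identity, hence it has order exactly $2$.

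The only point requiring a word of care is ensuring the element is genuinely non-identity so that the order is $2$ rather than $1$: this is immediate because $as$ lies in the nontrivial coset $As$ and therefore cannot equal $e \in A$. I expect no real obstacle here — the computation is a one-line application of the semidirect-product relations, which is exactly why the paper calls it a trivial exercise. The main thing to get right is simply the bookkeeping of the inversion action and the fact that $s$ is its own inverse.
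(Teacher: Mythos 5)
Your proof is correct: the paper itself omits the argument, declaring it ``a trivial exercise,'' and your computation $(as)^2 = a(sas^{-1})s^2 = aa^{-1} = e$ together with the observation that $as \notin A$ forces $as \neq e$ is exactly the intended standard argument. Nothing is missing.
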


\begin{proposition}\label{prop:dihMaximals}
Every maximal subgroup of $\dih(A)$ is even except possibly $A$. 
\end{proposition}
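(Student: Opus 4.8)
The plan is to take an arbitrary maximal subgroup $M$ of $\dih(A)$ with $M \neq A$ and show directly that $M$ contains an involution, which forces $|M|$ to be even by Lagrange's Theorem. The entire argument hinges on the fact (recorded in Proposition~\ref{prop:dihElements}) that every element of $\dih(A)$ outside $A$ has order $2$, so I only need to produce one element of $M$ lying outside $A$.

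First I would dispense with the possibility that $M \subseteq A$. Since $A$ has index $2$ in $G = \dih(A)$, it is a proper subgroup, so a chain $M \subsetneq A \subsetneq G$ would contradict the maximality of $M$; hence $M \subseteq A$ would force $M = A$, which is excluded by hypothesis. Therefore $M \not\subseteq A$, and I may choose some $g \in M \setminus A$.

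Next, Proposition~\ref{prop:dihElements} gives that $g$ has order $2$, so $M$ contains an element of order $2$. By Lagrange's Theorem, $2$ divides $|M|$, and hence $M$ is even. Since $M$ was an arbitrary maximal subgroup other than $A$, this shows every maximal subgroup of $\dih(A)$ is even except possibly $A$ itself. I do not expect any genuine obstacle here: the only point requiring a moment's care is the first step, namely verifying that a maximal subgroup different from $A$ cannot be contained in $A$, and this follows immediately from $A$ being a proper (index $2$) subgroup.
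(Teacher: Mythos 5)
Your proof is correct and takes essentially the same approach as the paper's: find an element of $M$ outside $A$, apply Proposition~\ref{prop:dihElements} to see it is an involution, and conclude $M$ is even. The only difference is that you explicitly justify why $M \not\subseteq A$ (via maximality of $M$ and properness of $A$), a step the paper's proof asserts without comment.
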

\begin{proof}
Let $M$ be a maximal subgroup of $\dih(A)$ that is not equal to $A$.  Then $M$ contains an element of $\dih(A)$ that is not contained in $A$.  By Proposition~\ref{prop:dihElements}, this element has order $2$, so $M$ is even. 
\end{proof}

\begin{proposition}\label{prop:gendih}
The avoidance games on the generalized dihedral groups satisfy: 
\[
\dng(\dih(A)) =\begin{dcases}
*3, & A \text{ is odd and cyclic} \\
*0, & \text{otherwise}.
\end{dcases}
\]
\end{proposition}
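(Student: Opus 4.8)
The plan is to apply the classification machinery developed above, specifically Corollary~\ref{cor:DNGCriteria}, by analyzing the maximal subgroups of $\dih(A)$ according to the structure of $A$. The key observation, already recorded in Proposition~\ref{prop:dihMaximals}, is that every maximal subgroup of $\dih(A)$ is even with the single possible exception of $A$ itself. So the entire argument reduces to deciding (i) whether $A$ is an odd maximal subgroup, and (ii) when it is, whether the set of even maximal subgroups covers $\dih(A)$.

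First I would dispose of the generic case. If $A$ is even, then $|\dih(A)|=2|A|$ is divisible by $4$ and in fact $A$ itself is even, so \emph{every} maximal subgroup is even by Proposition~\ref{prop:dihMaximals}; Corollary~\ref{cor:DNGCriteria}(2) then gives $\dng(\dih(A))=*0$. If $A$ is odd but non-cyclic, I would argue that $A$ fails to be the unique odd maximal subgroup in a way that still forces covering: the point is that the odd elements of $\dih(A)$ are exactly the elements of $A$ (by Proposition~\ref{prop:dihElements}, everything outside $A$ is an involution), so the question of whether the even maximal subgroups cover the group amounts to whether every element of $A$ lies in some even maximal subgroup. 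Using Corollary~\ref{cor:0iffCovering}, it suffices to show every odd-order element $g\in A$ sits in a proper even subgroup; since $A$ is non-cyclic, $\langle g\rangle$ is a proper subgroup of $A$, and I can enlarge it by adjoining an involution $t\notin A$ to get $\langle g,t\rangle$, a proper even subgroup of $\dih(A)$ (it is proper because it meets $A$ in the proper subgroup $\langle g\rangle$). Hence the even maximal subgroups cover, and Corollary~\ref{cor:DNGCriteria}(3)(a) yields $*0$.

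That leaves the remaining case, $A$ odd and cyclic, which should give $*3$. Here $A$ is an odd maximal subgroup, so $\dih(A)$ has maximal subgroups of both parities, and by Corollary~\ref{cor:DNGCriteria}(3) I must show the even maximal subgroups do \emph{not} cover $\dih(A)$. Equivalently, via Corollary~\ref{cor:0iffCovering}, I must exhibit an odd-order element lying in no proper even subgroup: the natural candidate is a generator $a$ of the cyclic group $A$. I would argue that any even subgroup $H$ containing $a$ must contain all of $\langle a\rangle=A$ together with an involution outside $A$, hence equals $\dih(A)$; equivalently, $\langle a,t\rangle=\dih(A)$ for every involution $t$, since $t\notin A$ forces $\langle a,t\rangle$ to strictly contain the index-$2$ subgroup $A$. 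Thus $a$ generates $\dih(A)$ with every involution, the even maximals fail to cover, and $\dng(\dih(A))=*3$.

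The main obstacle is the odd non-cyclic case, where I must be careful that the subgroup $\langle g,t\rangle$ produced is genuinely proper; the cleanest way to guarantee this is to note that $\langle g, t\rangle \cap A$ is generated by $g$ together with conjugation products that stay inside $\langle g\rangle$ (because $t$ acts by inversion), so $\langle g,t\rangle\cap A=\langle g\rangle\neq A$, forcing $\langle g,t\rangle\neq \dih(A)$. Verifying this intersection claim—essentially that adjoining an inverting involution to $\langle g\rangle$ produces $\dih(\langle g\rangle)$ rather than anything larger—is the one step requiring genuine attention, though it is a routine consequence of the inversion action and Proposition~\ref{prop:dihElements}.
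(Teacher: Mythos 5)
Your proposal is correct and takes essentially the same route as the paper's own proof: the same three-way case split ($A$ even; $A$ odd non-cyclic; $A$ odd cyclic), using Proposition~\ref{prop:dihMaximals} to get $*0$ when $A$ is even, the subgroup $\langle g,t\rangle=\dih(\langle g\rangle)<\dih(A)$ with Corollary~\ref{cor:0iffCovering} in the odd non-cyclic case, and the observation that a generator of an odd cyclic $A$ lies in no even maximal subgroup to get $*3$. The only difference is that you spell out the properness verification $\langle g,t\rangle\cap A=\langle g\rangle$, which the paper leaves implicit.
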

\begin{proof}
If $A$ is even, then all maximal subgroups of $\dih(A)$ are even by Proposition~\ref{prop:dihMaximals}.  Then $\dng(\dih(A))=*0$ by Corollary~\ref{cor:DNGCriteria}, so we may assume that $A$ is odd.

If $A=\langle a \rangle$ for some $a \in A$, then $\langle L, a \rangle =G$ for all even maximal subgroups $L$. Then  $a$ is not in the union of the even maximal subgroups,  and we conclude that $\dng(\dih(A))=*3$ by Theorem~\ref{thm:DNGClassification}.

If $A$ is non-cyclic,  let $g \in \dih(A)$ have odd order; then $g \in A$ by Proposition~\ref{prop:dihElements}.  Let $t \in \dih(A)$ be any element of order $2$.  Then $\langle g,t \rangle = \dih(\langle g \rangle) < \dih(A)$ since $A$ is not cyclic, so $\dng(G)=*0$ by Corollary~\ref{cor:0iffCovering}.
\end{proof}

Note that $\dng(\dih(A))$ can only be $*3$ if $\dih(A)$ is a dihedral group.  An alternative proof for Proposition~\ref{prop:gendih} is to determine the simplified structure diagrams shown in Figure~\ref{fig:gendih} for 
$\dng(\dih(A))$ in terms of the minimum number of generators $d(A)$ of $A$ and the parities of $A$ and $\Phi(A)$.

\begin{figure}
\begin{tabular}{ccccccccc}
\multicolumn{2}{c}{$A$ is even} & &  \multicolumn{3}{c}{$A$ is odd} \\
\includegraphics[scale=0.8]{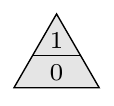} & \includegraphics[scale=0.8]{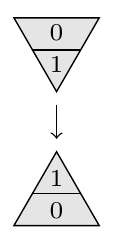}
&$\qquad$ &
\includegraphics[scale=0.8]{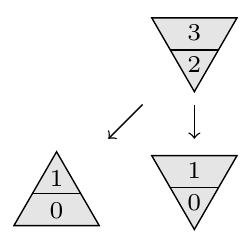} & \includegraphics[scale=0.8]{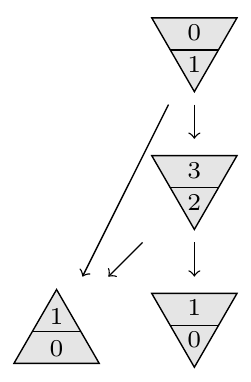} & \includegraphics[scale=0.8]{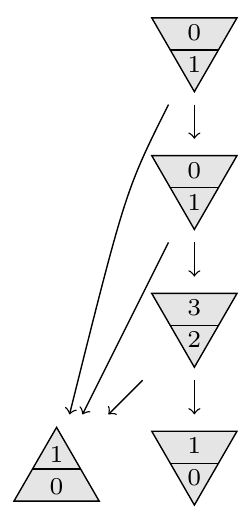}\tabularnewline
$\Phi(A)$ even & $\Phi(A)$ odd 
& & 
$\spr(A)=1$ &  $\spr(A)=2$ & $\spr(A)\ge3$\tabularnewline 
\end{tabular}

\protect\caption{\label{fig:gendih}
Simplified structure diagrams for $\protect\dng(\protect\dih(A))$. 
}
\end{figure}


\subsection{Generalized Quaternion Groups}


Recall that a group $G$ is a generalized quaternion group (or dicyclic group) if $G \cong \langle x,y \mid x^{2n}=y^4=1, x^n=y^2,x^y=x^{-1}\rangle$ for some $n \geq 2$. The quaternion group of order $8$ is the $n=2$ case.  

\begin{proposition}
If $G$ is a generalized quaternion group, then $\dng(G)=*0$.
\end{proposition}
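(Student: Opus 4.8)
The plan is to show that the Frattini subgroup $\Phi(G)$ has even order and then invoke Corollary~\ref{cor:evenFrattini}, which immediately yields $\dng(G)=*0$. The whole argument therefore reduces to exhibiting a single well-behaved involution that lies inside every maximal subgroup.

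First I would record the standard structural facts about $G=\langle x,y\mid x^{2n}=y^4=1,\ x^n=y^2,\ x^y=x^{-1}\rangle$, which has order $4n$. Setting $z:=x^n=y^2$, a direct computation with the relations shows that $z^2=x^{2n}=1$ with $z\neq e$, that $z$ is central (since $y^{-1}x^n y=x^{-n}=x^{n}$, using $x^{2n}=1$), and that $z$ is the \emph{unique} involution of $G$: every element lies either in the cyclic subgroup $\langle x\rangle$ of order $2n$, whose only involution is $x^n=z$, or has the form $x^iy$, and the computation $(x^iy)^2=x^i(y x^i y^{-1})y^2=x^ix^{-i}y^2=z\neq e$ shows that every such element has order $4$. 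This verification is the only place where the presentation enters, and it is routine.

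The heart of the argument is then to show that $z\in\Phi(G)$, i.e.\ that $z$ lies in every maximal subgroup $M$. Suppose instead that $z\notin M$ for some maximal $M$. Since $\langle z\rangle=Z(G)$ is normal, $M\langle z\rangle$ is a subgroup properly containing $M$, so $M\langle z\rangle=G$ by maximality; because $M\cap\langle z\rangle=\{e\}$ this forces $|M|=|G|/2=2n$, which is even. On the other hand, $z$ being the unique involution means that $z\notin M$ leaves $M$ with no element of order $2$, so $|M|$ is odd by Cauchy's Theorem, a contradiction. Hence $z$ belongs to every maximal subgroup.

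Consequently $\langle z\rangle\le\Phi(G)$, so $\Phi(G)$ is even and $\dng(G)=*0$ by Corollary~\ref{cor:evenFrattini}. The only mild obstacle I anticipate is the bookkeeping needed to confirm, directly from the presentation, that $z$ is central and is the sole involution; once that is in hand, the passage to $\Phi(G)$ is a short index–parity contradiction using nothing beyond Cauchy's Theorem and the normality of the center.
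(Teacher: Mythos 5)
Your proof is correct, but it follows a genuinely different route from the paper's. The paper's proof writes $G = X \cup Xy$ with $X=\langle x\rangle$, checks that every element of $Xy$ has order $4$, concludes that every odd-order element of $G$ lies in the proper even subgroup $X$, and applies Corollary~\ref{cor:0iffCovering}. You instead show that the unique involution $z=x^n=y^2$ is central and, via the index/Cauchy parity contradiction, lies in every maximal subgroup, so that $\Phi(G)$ is even and Corollary~\ref{cor:evenFrattini} finishes. Both arguments rest on the same core computation $(x^iy)^2=z$ (every element outside $\langle x\rangle$ has order $4$), but they feed it into different criteria from the classification. Your version proves somewhat more: it shows that every maximal subgroup of $G$ is even, which places $G$ under Item~(\ref{item:PhiEven}) of the checklist in Corollary~\ref{cor:checklist} and, by the paper's unnumbered proposition on groups whose maximal subgroups all share a parity, also shows that the outcome of $\dng(G)$ does not depend on strategy at all; the paper's version is a bit shorter because it never needs to reason about maximal subgroups. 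One harmless slip: you assert $\langle z\rangle = Z(G)$ when you have only verified $\langle z\rangle \le Z(G)$, but that inclusion is all you use, since any central subgroup is normal.
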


\begin{proof}
Let $g \in G$ have odd order.  Using notation from the presentation above, one can easily verify that $G=X \cup Xy$, where $X=\langle x\rangle$.  It is easy to check that every element of $Xy$ has order $4$ and so $g \in X$.  Since $X$ is even, $\dng(G)=*0$ by Corollary~\ref{cor:0iffCovering}.
\end{proof}


\subsection{Groups with Real Elements}


Recall~\cite{Rose} that an element $g$ of a group $G$ is \emph{real} if there is a $t \in G$ such that $g^t=g^{-1}$.  Note that $t$ induces an automorphism of order $2$ on $\langle g \rangle$, so $G$ must be even if it has a real element.  

\begin{proposition}\label{prop:RealElements}
If $G$ is a finite group such that $|G| > 2$ and $g \in G$ is a real element of odd order, then $g$ is contained in a proper even subgroup or $G \cong \dih(\langle g \rangle)$. 
\end{proposition}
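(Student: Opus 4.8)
The plan is to extract from the inverting element a well-behaved involution and then read off the two alternatives from how that involution acts on $g$. Since $g$ is real, fix $t \in G$ with $g^t = g^{-1}$, and take $g \neq e$, so that $\langle g \rangle$ is cyclic of odd order $d \geq 3$ (the identity is a degenerate case, and for nontrivial $g$ the group $G$ is even as noted above). Then inversion is a nontrivial automorphism of $\langle g \rangle$ of order $2$, and since $t$ induces this automorphism on $\langle g \rangle$, the order $n$ of $t$ must be even. The device I would use is the unique involution $w := t^{n/2}$ of the cyclic group $\langle t \rangle$. The point of passing from $t$ to $w$ is that $w$ is guaranteed to have order $2$ while still being a power of $t$, so its action on $g$ is controlled by a single sign.

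First I would compute that action. Since $g^t = g^{-1}$, we get $g^w = g^{t^{n/2}} = g^{(-1)^{n/2}}$, which yields a clean dichotomy according to the $2$-part of $n$: if $4 \mid n$ then $n/2$ is even and $w$ centralizes $g$, whereas if $n \equiv 2 \pmod 4$ then $n/2$ is odd and $w$ inverts $g$.

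In the centralizing case, $g$ and $w$ commute, so $H := \langle g, w \rangle$ is abelian; since $\langle g \rangle \cap \langle w \rangle = \{e\}$ by coprimality of orders, $H$ has order $2d$ and is even. It is proper, because $G$ contains $t$ inverting $g$ with $g \neq g^{-1}$, so $G$ is non-abelian and cannot equal the abelian $H$. Thus $g$ lies in the proper even subgroup $H$. In the inverting case, $w$ has order $2$, normalizes $\langle g \rangle$, and lies outside it (having even order), so $H = \langle g, w \rangle = \langle g \rangle \rtimes \langle w \rangle \cong \dih(\langle g \rangle)$, an even subgroup containing $g$. If $H < G$, then again $g$ lies in a proper even subgroup; if $H = G$, then $G \cong \dih(\langle g \rangle)$, the second alternative of the statement.

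The main obstacle is the first step: the inverting element $t$ itself need not be an involution and may have large even order, so working directly with $\langle g, t \rangle$ does not immediately separate the two conclusions. The key idea that resolves this is to replace $t$ by the unique involution $w$ of $\langle t \rangle$, whose action on $g$ is forced by the parity of $n/2$ to be either trivial or inversion; once $w$ is produced, the two conclusions of the proposition correspond exactly to these two possibilities, and the only remaining verifications (properness of the abelian subgroup via non-abelianness of $G$, and recognizing the semidirect product as $\dih(\langle g \rangle)$) are routine.
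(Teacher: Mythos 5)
Your proof is correct, and it takes a genuinely different route from the paper's. The paper keeps the full inverting element $t$: it forms $L=\langle g\rangle\langle t\rangle$ (a subgroup since $t$ normalizes $\langle g\rangle$) and is done if $L<G$; otherwise $\langle g\rangle$ is normal in $G$, so for an arbitrary involution $u$ the product $M=\langle g\rangle\langle u\rangle$ is again a subgroup, and it is done if $M<G$; in the remaining case $G=\langle g\rangle\cup\langle g\rangle u$ forces $u$ to invert $g$, giving $G\cong\dih(\langle g\rangle)$. You instead replace $t$ at the outset by the involution $w=t^{n/2}$ of $\langle t\rangle$ and let $n\bmod 4$ decide whether $w$ centralizes or inverts $g$. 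Your route buys a few things: no normality reduction and no appeal to an arbitrary involution of $G$ (the paper needs Cauchy's theorem, or your same power-of-$t$ trick, to produce $u$); and in the centralizing case properness is automatic, since $\langle g,w\rangle$ is abelian while $G$ is not (because $t$ inverts $g\neq g^{-1}$), rather than being handled by successive ``if proper, we are done'' reductions. The paper's staging buys a marginally stronger conclusion in the terminal case---when $g$ lies in no proper even subgroup, \emph{every} involution of $G$ inverts $g$---and avoids the $n\bmod 4$ computation. One shared caveat: both arguments tacitly assume $g\neq e$ (you explicitly set the identity aside as degenerate), since only then does conjugation by $t$ induce an order-$2$ automorphism of $\langle g\rangle$.
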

\begin{proof}
Let $K=\langle g \rangle$.  Because $g$ is real, there is a $t \in G$ such that $g^t=g^{-1}$.  Then conjugation by $t$ in $G$ induces an automorphism of $K$ of order $2$, so the order of $t$ is even.  Since $t$ normalizes $K$, we may define a subgroup $L= K\langle t \rangle$.  If $L < G$, then $g \in L < G$ and we are done since $L$ must be even.  

So assume that $G=L$ with $K$ normal in $G$.  Let $u$ be an involution in $G$, and let $M=K\langle u \rangle$.  If $M < G$, then $g \in M < G$ and we are done since $M$ is even.  So assume that $G=K\langle u \rangle=\langle g,u \rangle$ with $|G|=2|\langle g \rangle|$.  Since $g$ is real and $G=K \cup Ku$, $u$ must invert $g$ and we have $G \cong \dih(\langle g \rangle)$. 
\end{proof}


The next corollary immediately follows from Proposition~\ref{prop:CyclicGroups},~Corollary~\ref{cor:0iffCovering}  and Propositions~\ref{prop:gendih} and~\ref{prop:RealElements}.


\begin{corollary}\label{cor:RealGroups}
If $G$ is a finite group such that every element of $G$ of odd order is real, then 
\[
\dng(G) =\begin{dcases}
*1, & |G|=2\\
*3, & \text{$G\cong\dih(A)$ for some odd cyclic $A$}\\
*0, & \text{otherwise}.
\end{dcases}
\]
\end{corollary}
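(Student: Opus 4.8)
The plan is to split on $|G|$ and on whether $G$ is cyclic, routing each case into one of the four cited results. First I would dispose of $|G|=2$ immediately: here $G\cong\ZZ_2$ is even cyclic, so Proposition~\ref{prop:CyclicGroups} gives $\dng(G)=*1$, matching the first branch. For the rest I would record once and for all that \emph{under the hypothesis, $|G|>2$ forces $G$ to be even}. Indeed, if $G$ were odd, then any nonidentity $g$ has odd order $>1$, hence order $>2$, so $g\neq g^{-1}$; the hypothesis supplies $t$ with $g^t=g^{-1}$, and since conjugation by $t$ inverts the cyclic group $\langle g\rangle$ (an automorphism of order $2$), the element $t$ must have even order, contradicting the oddness of $G$. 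This is exactly the observation already recorded in the discussion preceding Proposition~\ref{prop:RealElements}.

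Next I would handle the cyclic case with $|G|>2$. A cyclic group is abelian, so ``real'' collapses to ``self-inverse'': any real $g$ satisfies $g^t=g=g^{-1}$. Thus every odd-order element $g$ obeys $g=g^{-1}$, forcing $g=e$, so $G$ has no nontrivial element of odd order and is therefore a $2$-group. Since $|G|>2$, we get $4\mid|G|$, and Proposition~\ref{prop:CyclicGroups} yields $\dng(G)=*0$, which is the ``otherwise'' value (note such $G$ is not of the form $\dih(A)$ for odd cyclic $A$, those being non-cyclic of order $>2$). For the non-cyclic case with $|G|>2$, evenness from the previous paragraph lets me invoke Corollary~\ref{cor:0iffCovering}, so proving $\dng(G)=*0$ reduces to showing every odd-order element lies in a proper even subgroup. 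If $G\cong\dih(A)$ for some odd cyclic $A$, Proposition~\ref{prop:gendih} returns $*3$ directly, giving the second branch. Otherwise, for an arbitrary $g$ of odd order I would apply Proposition~\ref{prop:RealElements} (its hypotheses hold since $|G|>2$ and $g$ is real), which gives either that $g$ lies in a proper even subgroup, or that $G\cong\dih(\langle g\rangle)$; the latter is excluded because $\langle g\rangle$ is odd cyclic and would contradict the standing assumption. Hence every odd-order element sits in a proper even subgroup, and Corollary~\ref{cor:0iffCovering} delivers $\dng(G)=*0$.

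The genuinely substantive work is all imported: Proposition~\ref{prop:RealElements} performs the hard group-theoretic dichotomy, and Corollary~\ref{cor:0iffCovering} converts the covering condition into the nim-value. Consequently the ``proof'' is essentially careful bookkeeping, and the only spots needing attention are precisely the ones that the two hard propositions do \emph{not} address: verifying that the cyclic subcase is correctly sent through Proposition~\ref{prop:CyclicGroups}, and confirming that $G$ is even before Corollary~\ref{cor:0iffCovering} is allowed. The lone place where one must pause is matching ``$G\cong\dih(\langle g\rangle)$ for a particular real $g$'' with ``$G\cong\dih(A)$ for some odd cyclic $A$''; this is immediate once one observes that $\langle g\rangle$ is cyclic of odd order, so no real obstacle arises, and I expect the whole argument to assemble cleanly into a complete case analysis covering exactly $|G|=2$, $G\cong\dih(A)$ with $A$ odd cyclic, and all remaining groups.
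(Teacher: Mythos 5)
Your proof is correct and follows exactly the route the paper intends: the paper's own proof is a one-line citation of Proposition~\ref{prop:CyclicGroups}, Corollary~\ref{cor:0iffCovering}, Proposition~\ref{prop:gendih}, and Proposition~\ref{prop:RealElements}, and your case analysis (handling $|G|=2$, forcing evenness, routing cyclic groups through Proposition~\ref{prop:CyclicGroups}, and using the dichotomy of Proposition~\ref{prop:RealElements} with Corollary~\ref{cor:0iffCovering} in the non-cyclic case) is precisely the bookkeeping that makes that citation rigorous.
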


\begin{exam}
If $G$ is any of the groups listed below, then $\dng(G)=*0$ by Corollary~\ref{cor:RealGroups} since every element of $G$ is real by~\cite{TiepZalesski}.
\begin{enumerate}
\item $\Omega_{2n+1}(q)$ with $q \equiv_4 1$ and $n \geq 3$ 
\item $\Omega_{9}^+(q)$ with $q \equiv_4 3$
\item P$\Omega_{4n}^+(q)$ with $q \not\equiv_4 3$ and $n \geq 3$
\item $\Omega_{4n}^+(q)$ with $q \not\equiv_4 3$ and $n \geq 3$
\item $\prescript{3}{}{\operatorname{D}}_4(q)$
\item any quotient of $\operatorname{Spin}_{4n}^-(q)'$ with $n \geq 2$
\item any quotient of $\operatorname{Sp}_{2n}(q)'$ with $q \not\equiv_4 3$ and $n \geq 1$
\end{enumerate}

\end{exam}

Proposition~\ref{prop:RealElements} implies that $\dng(\SYM(n))=*0$ for all $n \geq 4$ since every element in $\SYM(n)$ is conjugate to other elements with the same cycle structure, including the element's inverse.  This is generalized in the following corollary for Coxeter groups.  Recall that the Coxeter groups of types $A_n$ and $I_2(m)$ are isomorphic to $\SYM(n+1)$ and the dihedral group of order $2m$, respectively~\cite{Humphreys1990}.

\begin{corollary}\label{cor:Coxeter}
If $G$ is a finite irreducible Coxeter group, then  
\[
\dng(G) =\begin{dcases}
*1, & \text{$G$ is of type $A_1$}\\
*3, & \text{$G$ is of type $I_2(m)$ for some odd $m$}\\
*0, & \text{otherwise}.
\end{dcases}
\]
\end{corollary}
\begin{proof}
If $G$ is of type $A_1$, then $G$ is isomorphic to $\ZZ_2$ and $\dng(G)=*1$ by Proposition~\ref{thm:DNGClassification}.  If $G$ is of type $I_2(m)$ for some odd $m$ (which includes type $A_2$), then $G$ is dihedral of order $2m$, and hence $\dng(G)=*3$ by Proposition~\ref{prop:gendih}.  Every element of every other Coxeter group is real by~\cite[Corollary~3.2.14]{MR1778802}, so the result follows from Corollary~\ref{cor:RealGroups}. 
\end{proof}


\subsection{Alternating Groups}


Let $\ALT(n)$ denote the alternating group on $n$ letters.  The last two authors proved in~\cite{ErnstSieben} that $\dng(\ALT(3))=*3=\dng(\ALT(4))$.  Barnes proved in \cite{Barnes} that $\dng(\ALT(n))=*0$ if $n$ is greater than $5$ and not a prime that is congruent to $3$ modulo $4$.  The full characterization of $\dng(\ALT(n))$ can be found in \cite{BeneshErnstSiebenSymAlt}, which uses the O'Nan--Scott Theorem~\cite{AschbacherScott} to show 
\[
\dng(\ALT(n)) =\begin{dcases}
*3, & n\in\{3,4\} \text{ or } n \text{ is in a certain family of primes} \\
*0, & \text{otherwise}.
\end{dcases}
\]

\noindent
The smallest number in this family of primes is $19$.


\subsection{Sporadic Groups}


The sporadic groups are the finite simple groups that do not belong to any infinite family of simple groups.
\begin{proposition}
Let $M_{23}$ denote the Mathieu group that permutes $23$ objects and $B$ denote the Baby Monster. If $G$ is a sporadic group, then
\[
\dng(G) =\begin{dcases}
*3, & G \cong M_{23} \text{ or } G \cong B\\
*0, & \text{otherwise.} 
\end{dcases}
\]
\end{proposition}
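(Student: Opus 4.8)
The strategy is to invoke the checklist of Corollary~\ref{cor:checklist}: for each sporadic group $G$, it suffices to identify which of the six cases applies. Since every sporadic group has even order and is non-cyclic (indeed non-abelian simple), cases (1) and (2) never apply, and since a simple group has trivial center and trivial Frattini subgroup, case (3) never applies either. Thus the dichotomy reduces entirely to the covering question of cases (4)--(6): either every maximal subgroup is even, or the set of even maximal subgroups covers $G$ (both giving $\dng(G)=*0$), versus the set of even maximal subgroups failing to cover $G$ (giving $\dng(G)=*3$). By Corollary~\ref{cor:0iffCovering}, the game equals $*3$ exactly when some element of odd order lies in \emph{no} proper even subgroup, equivalently when some odd-order element is contained only in odd maximal subgroups.

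\textbf{Reduction to odd maximal subgroups.} First I would observe that if $G$ has no odd maximal subgroup at all, then every maximal subgroup is even and we are immediately in case (4) with $\dng(G)=*0$. So the only candidates for $\dng(G)=*3$ are sporadic groups possessing at least one maximal subgroup of odd order. The key structural fact to exploit is that an odd maximal subgroup $M$ contributes to failure of even-covering only if some element of $M$ escapes all even maximal subgroups. Concretely, I would search the known maximal subgroup data (the ATLAS~\cite{} classification of maximal subgroups of the sporadic groups is complete) for odd-order maximal subgroups, and for each such $M$ test whether every element of $M$ of odd order also lies in some even maximal subgroup. The claim singles out $M_{23}$ and the Baby Monster $B$ as the only two sporadic groups where this fails.

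\textbf{Carrying out the verification.} For $M_{23}$ and $B$, the plan is to exhibit explicitly an odd-order element $g$ lying in an odd maximal subgroup such that $g$ sits in no even maximal subgroup: equivalently, to show that the odd maximal subgroup containing $g$ meets no even maximal subgroup in a subgroup containing $g$. In $M_{23}$ the relevant maximal subgroup is the Frobenius group $23{:}11$ of order $253$, and one checks that an element of order $23$ lies in no even maximal subgroup; this forces the even maximal subgroups to fail to cover $M_{23}$, giving $\dng(M_{23})=*3$ by case (6). A parallel analysis applies to $B$ using an odd-order maximal subgroup (of order $47\cdot 23$). For all remaining sporadic groups, the task is the reverse: verify that every odd-order element lies in some even proper subgroup, so that case (4) or (5) applies and $\dng(G)=*0$. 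The natural tool here is Corollary~\ref{cor:RealGroups} for those sporadics in which every odd-order element is real, combined with direct computation in GAP~\cite{GAP} (using the character-table library and maximal-subgroup data) for the finitely many remaining cases.

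\textbf{Main obstacle.} The hard part is not any single conceptual step but the exhaustive case analysis across all twenty-six sporadic groups, which fundamentally relies on the completeness of the classification of their maximal subgroups. For the largest groups---the Monster in particular---the list of maximal subgroups was only completed relatively recently, so the proof depends on citing that external classification rather than deriving it. The genuine verification burden is confined to checking, for each group with an odd maximal subgroup, whether the odd-order elements of that subgroup are covered by even maximal subgroups; this is a finite but delicate computation, and isolating exactly $M_{23}$ and $B$ as the two exceptions is where the real work lies.
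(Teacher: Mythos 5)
Your overall route is the same as the paper's: reduce via Corollary~\ref{cor:checklist} (equivalently Corollary~\ref{cor:0iffCovering}) to the question of whether the even maximal subgroups cover $G$, exhibit an element of order $23$ in $M_{23}$ and an element of order $47$ in $B$ lying in no even maximal subgroup, and dispose of the groups all of whose maximal subgroups are even by citing the classification literature. Those parts are fine. But there is a genuine gap at exactly the point your own reduction flags as ``where the real work lies'': the Thompson group $Th$. Besides $M_{23}$ and $B$, $Th$ is the third sporadic group possessing odd maximal subgroups (a single class, isomorphic to $31.15$), so under your scheme it is a live candidate for $*3$, and your proposal never rules it out. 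Neither of the tools you offer for the remaining cases closes it. Corollary~\ref{cor:RealGroups} is inapplicable, because it requires \emph{every} odd-order element to be real, and in $Th$ the elements of order $31$ are not real: the two conjugacy classes of such elements are interchanged by inversion. And ``direct computation in GAP'' is not realistic here; $Th$ has order roughly $9\times 10^{16}$, so one cannot simply enumerate its maximal subgroups and test the covering condition element by element.

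The paper closes this case with an argument you would need to supply. By Linton's classification, the only odd maximal subgroups of $Th$ are the copies of $31.15$, so covering can only fail at elements of order $3$, $5$, $15$, or $31$. The character table shows that elements of orders $3$, $5$, and $15$ have even-order centralizers, hence lie in proper even subgroups and therefore in even maximal subgroups. For order $31$ one needs a trick: $Th$ has an even maximal subgroup $H \cong 2^5.L_5(2)$ whose order is divisible by $31$; since the two classes of $31$-elements are swapped by inversion, any $g$ of order $31$ is conjugate either to $h$ or to $h^{-1}$ for some $h \in H$ of order $31$, and in either case $g$ lies in a conjugate of $H$. This exchange-of-classes observation is precisely what prevents $Th$ from being a third exception, and it is the step missing from your proposal; without it you have not proved the ``otherwise'' clause of the statement. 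A smaller inaccuracy: the ATLAS lists of maximal subgroups are not complete for the largest sporadic groups, which is why the paper cites dedicated classification papers for $J_4$, $Fi_{23}$, $Fi_{24}'$, and the Monster rather than the ATLAS alone.
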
 
\begin{proof}
Suppose $G \cong M_{23}$.  Then $23$ divides $|G|$, so $G$ has an element of order $23$.  The Atlas of Finite Groups~\cite{Atlas} states that no even maximal subgroup of $G$ has order divisible by $23$, so we conclude that elements of order $23$ are not in any even maximal subgroup and the set of even maximal subgroups does not cover $G$.  Then $\dng(G)=*3$ by Corollary~\ref{cor:DNGCriteria}. If $G \cong B$, then similarly elements of order $47$ are not contained in any even maximal subgroup, since no even maximal subgroup is divisible by $47$~\cite{wilson1999maximal}. So $\dng(G)=*3$ in this case, too. 

Now suppose that $G$ is isomorphic to the Thompson group $Th$.  There is only one class of maximal subgroups of $G$ that is odd~\cite{linton1989maximal}, and these odd maximal subgroups are isomorphic to $31.15$.  Thus, we only need to consider elements of order dividing $31 \cdot 15$, since all other elements are contained in some even maximal subgroup.  By the character table in~\cite{Atlas}, it suffices to consider elements of order $3$, $5$, $15$, and $31$.  Again by the character table, all elements of orders $3$, $5$, and $15$ are contained in centralizers of even order.  Therefore, all elements of $G$ are contained in an even maximal subgroup except possibly for elements of order $31$.

So let $M$ be a maximal subgroup of $G$ isomorphic to $31.15$, and let $g \in M$ have order $31$.  By the character table, there are exactly two conjugacy classes $C_1$ and $C_2$ of elements of order $31$, and any element of order $31$  is contained in one and its inverse is contained in the other.    By~\cite{Atlas}, there is a maximal subgroup $H$ of $G$ that is isomorphic to $2^5.L_5(2)$, so $31$ divides $|H|$ and $H$ has an element $h$ of order $31$.  Then $g$ is conjugate to either $h$ or $h^{-1}$; without loss of generality, assume that there is an $x \in G$ such that $h^x=g$.  Then $g = h^x \in H^x$, so $g$ is contained in an even maximal subgroup.  Therefore, the maximal subgroups of $G$ cover $G$, so $\dng(G)=*0$ by Corollary~\ref{cor:DNGCriteria}.

The remaining cases contain only even maximal subgroups, and therefore have nim-number $0$ by Corollary~\ref{cor:DNGCriteria}:
\begin{enumerate} 
\item The group $J_4$ has only even maximal subgroups by~\cite{kleidman1988maximal}.
\item The group $Fi_{23}$ has only even maximal subgroups by~\cite{kleidman1989maximal}.
\item The group $Fi_{24}'$ has only even maximal subgroups by~\cite{linton1991maximal}.
\item The group $M$ has only even maximal subgroups by~\cite{norton1998anatomy},~\cite{norton2002anatomy},~\cite{norton2013correction},~\cite{wilson1999maximal}, and~\cite{wilson2006new}.
\end{enumerate} 

Every maximal subgroup is even for every sporadic group not already mentioned~\cite{Atlas}, so they also have nim-number $0$. 
\end{proof}

\subsection{Rubik's Cube Groups}


We can use our classification results together with a computer algebra system to handle some fairly large groups. There are $8$ and $20$ conjugacy classes of maximal subgroups of the $2\times2\times 2$ and $3\times3\times 3$ Rubik's Cube groups, respectively. A simple GAP calculation~\cite{WEB2} shows that all these maximal subgroups are even. Hence, the avoidance games on these groups are all $*0$.

\end{section}


\begin{section}{Groups with Odd Order Frattini Subgroup}


If the Frattini subgroup of a group $G$ is even, then $\dng(G)=*0$ by Corollary \ref{cor:evenFrattini}. 
If the Frattini subgroup is odd, then factoring out by the Frattini subgroup does not change the nim-number of $\dng(G)$. 

\begin{proposition}\label{prop:FrattiniQuotients}
Let $G$ be a finite group, and let $N$ be a normal subgroup of $G$ such that $N$ is odd and $N \leq \Phi(G)$.  Then $\dng(G)=\dng(G/N)$.
\end{proposition}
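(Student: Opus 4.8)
The plan is to reduce everything to the maximal-subgroup classification in Corollary~\ref{cor:DNGCriteria}, which determines $\dng(G)$ entirely from the parities of the maximal subgroups of $G$ together with whether the even maximal subgroups cover $G$. It therefore suffices to show that passing from $G$ to $G/N$ preserves all of this combinatorial data.

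First I would set up the correspondence between the maximal subgroups of the two groups. Since $N \leq \Phi(G)$ is contained in every maximal subgroup of $G$, the lattice isomorphism (correspondence) theorem gives a bijection $M \mapsto M/N$ between the maximal subgroups of $G$ and those of $G/N$, with $M = \pi^{-1}(M/N)$ where $\pi \colon G \to G/N$ is the quotient map. Note that $G/N$ is nontrivial, because $\Phi(G) < G$ for nontrivial $G$, so Corollary~\ref{cor:DNGCriteria} applies to both groups.

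Next I would check that this bijection preserves parity. Because $|M/N| = |M|/|N|$ and $|N|$ is odd, $|M|$ and $|M/N|$ have the same parity; hence $M$ is even if and only if $M/N$ is even. In particular, all maximal subgroups of $G$ are odd (respectively, even) if and only if the same holds in $G/N$, so the first two cases of Corollary~\ref{cor:DNGCriteria} transfer directly, and $G$ has maximal subgroups of both parities if and only if $G/N$ does.

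The remaining, and genuinely substantive, step is to show that the even maximal subgroups cover $G$ if and only if the even maximal subgroups cover $G/N$. For the forward direction, if $g \in G$ lies in an even maximal subgroup $M$, then $gN \in M/N$, an even maximal subgroup of $G/N$. Conversely, given $gN \in G/N$ contained in an even maximal subgroup $M/N$, the relation $M = \pi^{-1}(M/N)$ forces $g \in M$, and $M$ is even by the parity step; so covering of $G/N$ lifts to covering of $G$. Combining the three steps, $G$ and $G/N$ fall into the same case of Corollary~\ref{cor:DNGCriteria}, giving $\dng(G) = \dng(G/N)$. I expect the only point requiring care is this covering equivalence, specifically the use of $N \leq \Phi(G)$ to guarantee $M = \pi^{-1}(M/N)$ so that membership lifts from $G/N$ back to $G$.
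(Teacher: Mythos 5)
Your proof is correct, and it takes a genuinely cleaner route than the paper's. The paper argues by cases: $|G|=2$ is handled trivially, odd $G$ by Corollary~\ref{cor:finiteOdd}, even cyclic $G$ by Proposition~\ref{prop:CyclicGroups} together with the observation that $4$ divides $|G|$ if and only if $4$ divides $|G/N|$, and only for even non-cyclic $G$ does the paper run the argument you give --- the Correspondence Theorem bijection $M\mapsto M/N$, preservation of parity because $N$ is odd, and the equivalence of the two covering conditions --- before finishing with Theorem~\ref{thm:DNGClassification}. By targeting Corollary~\ref{cor:DNGCriteria} instead, which reads off $\dng(G)$ purely from the parities of the maximal subgroups and whether the even ones cover the group, you let the same correspondence argument handle every group at once: your bijection preserves which of the four cases of that corollary the group falls into, so no split on order $2$, odd order, or cyclicity is needed. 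Besides uniformity, your route sidesteps the one delicate point in the paper's cyclic case: when $|G|\equiv_4 2$ and $|G|>2$, one must also know that $|G/N|\neq 2$ (otherwise Proposition~\ref{prop:CyclicGroups} would give $*1$ rather than $*3$); this holds because for cyclic $G$ every prime dividing $|G|$ divides $|G/\Phi(G)|$ and hence $|G/N|$, but the paper leaves it implicit, whereas in your argument the question never arises. The hypotheses you do invoke --- that $G/N$ is nontrivial and that every maximal subgroup of $G$ contains $N$, so that membership lifts back from $G/N$ to $G$ --- are exactly the ones you justify from $N\le\Phi(G)<G$.
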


\begin{proof}
The result follows trivially if $|G|=2$ and follows from Corollary~\ref{cor:finiteOdd} if $G$ is odd, so assume that $G$ is even of order greater than $2$.  If $G$ is cyclic, then $4$ divides $|G|$ if and only if $4$ divides $|G/N|$, since $N$ is odd; the result follows by Proposition~\ref{prop:CyclicGroups}.  So, suppose that $G$ is non-cyclic.

Let $x \in G$.  If $M$ is a maximal subgroup of $G$, then $N \leq \Phi(G) \leq M$ by the definition of $\Phi(G)$.  By the Correspondence Theorem~\cite[Theorem~3.7]{Isaacs1994}, the maximal subgroups of $G/N$ that contain $Nx$ are exactly the set of subgroups of the form $M/N$, where $M$ is a maximal subgroup of $G$ containing $x$.  Additionally, $|(MN)/N|=|M/N|=|M|/|N| \equiv_2 |M|$ since $N$ is odd.  
Therefore, the parities of the orders of maximal subgroups of $G/N$ that contain $Nx$ are exactly the same as the parities of orders of maximal subgroups of $G$ that contain $x$, so the set of even maximal subgroups of $G/N$ covers $G/N$ if and only if the set of even maximal subgroups of $G$ covers $G$.  The result follows from Theorem~\ref{thm:DNGClassification}. 
\end{proof}

It is well-known that the Frattini quotient of a nilpotent group is abelian. Since the avoidance games for abelian groups were classified in~\cite{ErnstSieben}, we could have used Proposition~\ref{prop:FrattiniQuotients} to prove Proposition~\ref{prop:nilpotent}.

\begin{figure}
\begin{tabular}{cccc}
\includegraphics{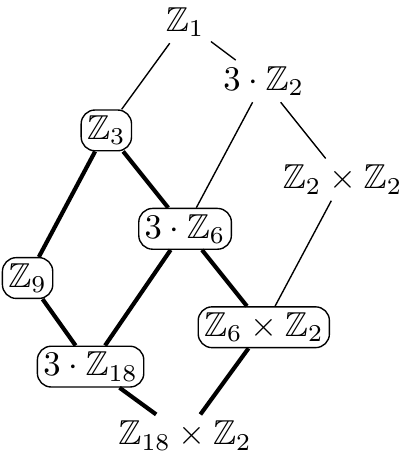} & \includegraphics{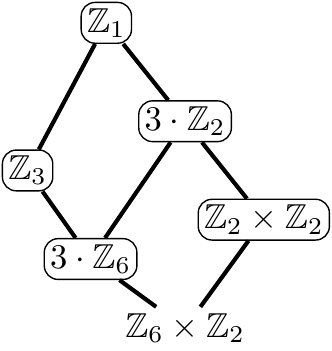} & \includegraphics[scale=.8]{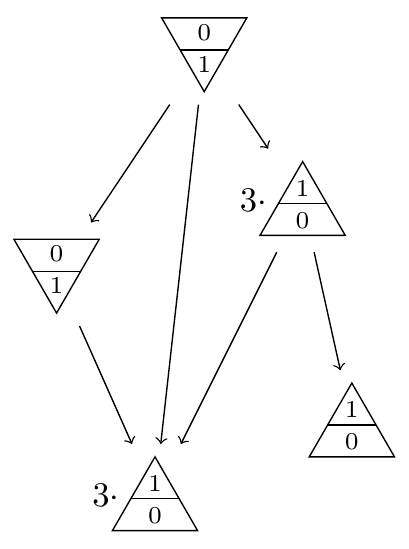} &  \includegraphics[scale=.8]{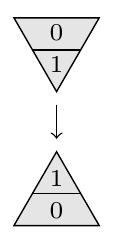}  \\
(a) & (b) & (c) & (d)
\end{tabular}
\caption{\label{fig:C18xC2}The subgroup lattices for $\mathbb{Z}_{18}\times\mathbb{Z}_2$ and its quotient group $\mathbb{Z}_{6}\times\mathbb{Z}_2$, together with their common structure diagram 
and simplified structure diagram for the avoidance game. The intersection subgroups are framed. Note that $3 \cdot \mathbb{Z}_{n}$ refers to $3$ distinct copies of $\ZZ_{n}$ at the same node in the diagram. }
\end{figure}

\begin{exam}
The Frattini subgroup of $G=\mathbb{Z}_{18}\times \mathbb{Z}_2$ is $\Phi(G)\cong\mathbb{Z}_3$. Hence, $\dng(G)=\dng(G/\Phi(G))=\dng(\mathbb{Z}_6\times \mathbb{Z}_2)=*0$, where the last equality follows from Proposition~\ref{prop:nilpotent}. Figure~\ref{fig:C18xC2} shows how the subgroup lattice
changes in the factoring process while the structure diagrams remain the same. 
Notice that there are three arrows in Figure~\ref{fig:C18xC2}(c) joining the structure classes corresponding to the intersection subgroups
$\mathbb{Z}_1$ and $3\cdot\mathbb{Z}_6$ in the structure diagram of $\dng(\mathbb{Z}_6\times \mathbb{Z}_2)$, even though there is no edge joining $\mathbb{Z}_1$ to any copy of $\mathbb{Z}_6$ in Figure ~\ref{fig:C18xC2}(b). A similar relationship holds between the structure classes corresponding to the intersection subgroups $\mathbb{Z}_3$ and $3\cdot\mathbb{Z}_{18}$ for $\dng(\mathbb{Z}_{18}\times \mathbb{Z}_2)$.
\end{exam}

Factoring out by the Frattini subgroup can reduce the size of the group to make it more manageable for computer calculation. 

\begin{exam}
The Frattini subgroup of the special linear group $SL(3,7)$ is isomorphic to $\mathbb{Z}_3$. The Frattini quotient is isomorphic to the projective special linear group $PSL(3,7)$. 
A GAP calculation shows that the Frattini quotient has $69008$ even and $32928$ odd maximal subgroups.
The set of even maximal subgroups does not cover $PSL(3,7)$, so $\dng(SL(3,7))=\dng(PSL(3,7))=*3$ by Corollary~\ref{cor:DNGCriteria}(3)(b). 
\end{exam}


\end{section}


\begin{section}{Further Questions}\label{sec:questions}

Below we outline a few open problems related to $\dng(G)$.
\begin{enumerate}
\item The nim-number of the avoidance game is determined by the structure diagram, which is determined by the structure digraph and the parity of the intersection subgroups.  
Is it possible to determine the structure diagram from the abstract subgroup lattice structure without using any information about the subgroups?
\item Can we characterize $\dng(G\times H)$, or even $\dng(G\rtimes H)$, in terms of $\dng(G)$ and $\dng(H)$?
\item Let $N$ be a normal subgroup of $G$ such that $N\le \Phi(G)$. Are the structure digraphs of $\dng(G)$ and $\dng(G/N)$ isomorphic?
\item Can we characterize the nim-numbers of the achievement games from~\cite{ErnstSieben} in terms of covering conditions by maximal subgroups similar to Corollary~\ref{cor:DNGCriteria} for avoidance games?
\item Is it possible to determine nim-numbers for avoidance games played on algebraic structures having maximal sub-structures, such as quasigroups, semigroups, monoids, and loops?
\item The nim-number of a game position $P$ can be determined using the type of the structure class containing $P$. The type of the structure class requires a recursive computation using the minimal excludant.
Is it possible to avoid this computation and determine the nim-value of $P$ in terms of a simple condition using only the maximal subgroups? 
\item What are the nim-values of avoidance games played on the remaining classical and simple groups?   
\end{enumerate}

\end{section}

\bibliographystyle{amsplain}
\bibliography{game}

\end{document}